\newcommand{\norm}[1]{\Vert#1\Vert }
\newtheorem{theorem}{Theorem}
\newtheorem{lemma}[theorem]{Lemma}
\newtheorem{definition}[theorem]{Definition}
\newtheorem{corollary}[theorem]{Corollary}
\newtheorem{proposition}[theorem]{Proposition}
\newtheorem{remark}[theorem]{Remark}
\title[Gabor Systems and Almost Periodic Functions]{Gabor Systems and Almost Periodic Functions}
\author[P. Boggiatto, C. Fern\'andez, A. Galbis]{Paolo Boggiatto, Carmen Fern\'{a}ndez and  Antonio Galbis}
\subjclass[2010]{Primary 42C40. Secondary 42C15, 42A75.}
\address{P. Boggiatto: Department of Mathematics, University of Torino, Via Carlo Alberto 10, 10123 Torino, Italy}
\email{paolo.boggiatto@unito.it}
\address{C. Fern\'{a}ndez: Departamento de An\'{a}lisis Matem\'{a}tico, Universidad de Valencia, Doctor Moliner 50,
46100 Burjasot (Valencia), Spain} \email{Carmen.Fdez-Rosell@uv.es}
\address{A. Galbis: Departamento de An\'{a}lisis Matem\'{a}tico, Universidad de Valencia, Doctor Moliner 50,
46100 Burjasot (Valencia), Spain} \email{Antonio.Galbis@uv.es}
\thanks{The research of C. Fern\'andez and A. Galbis was partially supported by the projects  MTM2013-43450-P and GVA Prometeo II/2013/013 (Spain).}
\date{}
\keywords{Frames. Gabor systems. Almost-periodic functions. AP-frames} \dedicatory{}
\begin{document}

\maketitle

\begin{abstract}
We give a construction of Gabor type frames for suitable separable
subspaces of the non-separable Hilbert spaces $AP_2({\mathbb R})$ of almost
periodic functions of one variable. Furthermore we determine a
non-countable generalized frame for the whole space $AP_2({\mathbb R}).$ We show
furthermore that Bessel-type estimates hold for the $AP$ norm with
respect to a countable Gabor system using suitable almost periodic
norms of sequencies.
\end{abstract}

\section{Introduction}

Almost periodic functions on $\mathbb R$ are a natural
generalization of usual periodic functions. Their first definition
originates with the works of H. Bohr in 1924-1926. Since then a rich
theory has been developed by a number of authors, particularly
significant among others were the contributions by H. Weyl, N.
Wiener, A. S. Besicovich, S. Bochner, V. V. Stepanov, C. de la
Vall\'ee Puoissin and on LCA groups by J. Von Neumann.

Almost periodic functions found applications in various areas of
harmonic analysis, number theory, group theory. In connection with
dynamical systems and differential equations they were studied by B.
M. Levitan and V. V. Zhikov \cite{LeviZhik82} starting from results
of J. Favard. In the more general context of pseudo-differential
calculus M. A. Shubin \cite{Shub74} introduced scales of Sobolev
spaces of almost periodic functions and recently A. Oliaro, L.
Rodino and P.Wahlberg
 \cite{OliaDodiWahl12} extended some of these results to almost periodic Gevrey classes.

Another recent direction of research is connected with frames
theory. As the main spaces of almost periodic functions are
non-separable they cannot admit countable frames. The problem then
arises in which sense frame-type inequalities are still possible for
norm estimations in these spaces. Results in this direction were
obtained by F. Galindo \cite{Gali04}, and J.R. Partington - B.
Unalmis \cite{PartUnal01}, \cite{Unal13}, using windowed Fourier
transform and wavelet transforms, involving therefore Gabor and
wavelets type frames. Further developments were recently obtained by
Y.H. Kim and A. Ron \cite{KimRon09}, \cite{Kim13}, with a
fiberization techniques developed by A. Ron and Z. Shen
\cite{RonShen95}, \cite{RonShen05} in the context of shift-invariant
sysyems.

This paper has been inspired by the above-mentioned results of
Kim-Ron. With different techniques we shall recover and extend some
of their results concerning almost periodic norm estimates using
Gabor systems. We postpone the description after we have introduced
the definitions and properties we need. We refer e.g. to
\cite{Besi54}, \cite{Cord09}, \cite{CordGheoBarb89} for detailed
presentations of the theory of almost periodic functions, and e.g.
to \cite{Chri03}, \cite{Groc01}, \cite{Heil11} for frames theory.

The first type of functions we consider is, according to Bohr, the
following.
\begin{definition}\label{Bohr}
{\rm The space $AP({\mathbb R})$ of {\sl almost periodic functions} is the set of
continuous functions $f:\mathbb R\rightarrow \mathbb C$ with the
property that for every $\epsilon>0$, there exists $L>0$, such that
every interval of the real line of length greater than $L$ contains a
value $\tau$ satisfying
$$
\sup_t |f(t+\tau)-f(t)|\le \epsilon.
$$
}
\end{definition}
We recall that $AP({\mathbb R})$ coincides with the uniform norm closure of the
space of trigonometric polynomials $$\begin{displaystyle}\sum_{j=1}^Nc_j
e_{\lambda_j}\end{displaystyle}$$ with $e_{\lambda_j}=e^{i\lambda_j t}$ and
$\lambda_j\in \mathbb R, \ c_j\in\mathbb C$.

The norm $\|.\|_{AP}$ associated with the inner product
$$
(f,g)_{AP}=\lim_{T\to+\infty}(2T)^{-1}\int_{-T}^{T}f(t)\overline{g(t)}\,
dt$$ makes $AP({\mathbb R})$ a non-complete, non-separable space.

\par\medskip
\begin{definition}{\rm
The completion of $AP({\mathbb R})$ with respect to this norm is the Hilbert
space $AP_2({\mathbb R})$ (sometimes also indicated as $B^2$) of {\sl Besicovich
almost periodic functions}. }
\end{definition}

$AP_2({\mathbb R})$ also coincides with the completion of the trigonometric
polynomials with respect to the norm $\|.\|_{AP}$. The set
$\{e_\lambda: \lambda\in\mathbb R\}$ is a non-countable orthonornal
basis of $AP_2({\mathbb R})$, consequently for every $f\in AP_2({\mathbb R})$ the expansion
$f=\sum_{\lambda}(f,e_\lambda)_{AP}\ e_\lambda$ holds, where at most
countably many terms are different form zero and the norm is given
by Parseval's equality $\|f\|_{AP}^2=\sum_{\lambda\in\mathbb
R}|(f,e_\lambda)_{AP}|^2$.
\par\medskip
We will refer to $(f,e_\lambda)_{AP}$ as the $\lambda$-th Fourier coefficient of $f\in AP({\mathbb R})$ and from now on we will write $\widehat{f} (\lambda) = (f,e_\lambda)_{AP}.$ For a given function $f\in L^1({\mathbb R})$ we also put
$$
\widehat{f}(\lambda):=\int_{{\mathbb R}}f(t)e^{-i\lambda t}\ dt.
$$ Since $AP({\mathbb R})\cap L^1({\mathbb R}) = \left\{0\right\}$ there is no possible confusion between the Fourier coefficients of an almost periodic function and the Fourier transform of an integrable function, although we use the same notation for both objects.
\par\medskip
As usual the operators of translation and modulation on $L^1({\mathbb R})$ are defined as
$$
T_x\psi(t) = \psi(t-x),\ \ M_\omega\psi(t) = e^{i\omega t}\psi(t).
$$
\par\medskip
Another important concept is the one of   almost periodic functions in the
sense of Stepanov defined as follows.

\par\medskip
\begin{definition}{\rm
The {\it Stepanov} class $\rm S^2$ is the closure of the trigonometric
polynomials in the amalgam space $W(L^2,L^\infty)$ of those measurable functions
$f$ such that
$$
\norm{f}_{W(L^2,L^\infty)} = \sup_{t\in {\mathbb
R}}\left(\int_t^{t+1}\left|f(s)\right|^2\ ds\right)^{\frac{1}{2}} < \infty.
$$
}
\end{definition}
See e.g. \cite{Groc01}, \cite{Heil11} for a presentation of Wiener
amalgam spaces. Between the above-mentioned spaces of almost
periodic functions the following continuous inclusions hold: $(AP({\mathbb R}),
\|.\|_{\infty})\subset (S^2,\|.\|_{W(L^2,L^\infty)}) \subset
(AP_2({\mathbb R}),\|.\|_{AP})$.

Finally an important role in our paper will be played by almost periodic sequences according to the following definition.
\begin{definition}
{\rm
A real or complex sequence $(a_j)_{j\in\mathbb N}$ is said {\sl almost periodic} if for every $\epsilon>0$ there exists $N\in\mathbb N$ such that any set of $N$ consecutive integers contains an integer $p$ satisfying
$$
\sup_{n\in\mathbb N} |a_{n+p}-a_n|\le \epsilon.
$$
}
\end{definition}
The definition is a discrete version of that of $AP({\mathbb R})$ and actually
almost periodic sequences can be characterized as restrictions to
the integers of functions in $AP({\mathbb R})$. The space of almost periodic
sequences, which is denoted by $AP(\mathbb Z)$, will be endowed with
the inner product
$$
(a,b)_{AP(\mathbb Z)}=\lim_{p\to+\infty}(2p)^{-1}\sum_{n=-p}^pa_n\overline{b_n}
$$
and the corresponding norm. For $\lambda\in [0, 2\pi)$, the
sequences $\tilde e_\lambda= (e^{i\lambda n})_{n\in\mathbb Z}$ are a
non-countable orthonormal system in $AP(\mathbb Z)$.
\par\medskip

The paper is organized in the following way. In section 2 we show how to construct an  almost periodic function from an almost periodic sequence in terms of translates of a fixed continuous function in the Wiener class. This can be viewed as an extension of the classical periodization lemma. Then, we characterize, for a given $\psi \in L^1(\mathbb{R}),$ the continuity of the map $$S:\left(AP(\mathbb{R}),\norm{\cdot}_{AP}\right)\to \left(AP(\mathbb{Z}),\norm{\cdot}_{AP(\mathbb{Z})}\right), \, f \mapsto
(\langle f, T_{k\alpha}\psi \rangle )_k.$$ The map $S$ is a kind of {\it analysis map.}  Sufficiency is obtained by looking at the continuity of the formal adjoint of $S.$ When $\psi$ is a continuous function in the Wiener amalgam space, the restriction to $AP(\mathbb{Z})$  of this  adjoint is $$a=(a_k)_k \mapsto \sum_{k\in {\mathbb Z}} a_kT_{\alpha k}\psi,$$ which takes values in $AP({\mathbb R})$ (Theorem \ref{thm:fromsequencetofunction}).
\par\medskip

In section 3 we consider Gabor systems $(T_{\alpha k}M_{\beta \ell})_{k,\ell}$ with $\psi \in W_0\cup (\widehat{W_0}\cap L^1(\mathbb{R}))$ and prove the estimate
$$
\sum_{\ell\in {\mathbb Z}}\norm{\big(\langle f, T_{\alpha k}M_{\beta\ell}\psi\rangle\big)_{k\in {\mathbb Z}}}_{AP}^2 \leq C \norm{f}_{AP}^2
$$ for every continuous and almost periodic function $f.$ This is to say that the  {\it analysis} map $$(AP(\mathbb{R}), ||\cdot ||_{AP})\to \ell^2(AP(\mathbb{Z})), f\to (\langle f, T_{\alpha k}M_{\beta\ell}\psi\rangle\big)_{k, \ell}$$ is continuous. To this end, we first prove the continuity of the formal adjoint map, thus  giving a description of what can be considered as the {\it synthesis} map in this context. This is the content of Theorems \ref{thm:besselW} and \ref{thm:besselfourierW}.
\par\medskip

In section 4 we obtain generalized frames for the non separable space $AP_2({\mathbb R}).$ In particular, we prove that $AP_2({\mathbb R})$ can be viewed as a direct orthogonal
(continuous) sum of separable Hilbert spaces for which we define
frames in the usual sense. The frames are described in terms of a given Gabor frame  $(T_{\alpha k}M_{\beta \ell})_{k,\ell}$ in $L^2(\mathbb{R}).$ Due to the non separability of $AP_2({\mathbb R})$ the range of the corresponding analysis operator is not longer $\ell^2(\mathbb{Z})$ but the (completion of) the space of square summable $AP(\mathbb{Z})$-valued sequences, or, alternatively, $L^2(E)$ where $E=[0,\frac{2\pi}{\alpha})\times \mathbb{Z}$ is endowed with the counting measure. This corresponds to the concept of {\sl generalized frame} in the sense
of G. Kaiser \cite{Kais94}, S.T Ali - J.P. Antoine - J.P. Gazeau
\cite{AliAntoGaze93}, D. Han - J.P. Gabardo \cite{GabaHan03}. This  will permit to connect to Kim-Ron results
about almost periodic Gabor frames extending them to the whole space
$AP_2({\mathbb R})$.
\par\medskip
Through the paper, $\alpha$ and $\beta$ are fixed positive constant.

\section{Almost periodicity and translates}

Let us recall that the Wiener space
$W$ is the space of functions $\psi\in L^\infty(\mathbb R)$ such
that
$$
\norm{\psi}_{W}:= \sum_{k\in {\mathbb Z}}\mbox{ess\ sup}_{x\in
[0,1]}\left|\psi(x-k)\right| < \infty.
$$
Its closed subspace formed by continuous functions is denoted by
$W_0$.

The classical periodization Lemma asserts, for fixed $\alpha>0$, the boundedness of the map
$\psi\in W \longrightarrow g = \sum_{k\in {\mathbb
Z}}T_{k\alpha}\psi\in L^\infty(\mathbb R)$ (see e.g. \cite{Groc01},
Lemma 6.1.2). We start by a generalization of this Lemma to the
almost periodic case, when $\psi\in W_0$.

\begin{theorem}\label{thm:fromsequencetofunction}
\
\begin{itemize}
 \item[(a)] For every $a = \left(a_k\right)_{k\in {\mathbb Z}}\in
AP({\mathbb Z})$ and $\psi\in W_0$ we have that $$g :=
\sum_{k\in {\mathbb Z}}a_k T_{k\alpha}\psi \in AP({\mathbb R}).$$
\item[(b)] The bilinear map $$B:AP({\mathbb Z})\times W_0\to (AP({\mathbb R}),\norm{\cdot}_{AP}),\ \
(a,\psi)\mapsto \sum_{k\in {\mathbb Z}}a_k T_{k\alpha}\psi,$$ is continuous.
\end{itemize}
\end{theorem}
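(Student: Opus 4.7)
The plan is to verify the three requirements of Bohr's definition for the pointwise sum $g(t)=\sum_{k}a_{k}\psi(t-k\alpha)$ to obtain (a), and to estimate the Besicovich mean of $|g|^{2}$ directly for (b). The workhorse estimate throughout is the elementary Wiener bound
\[
\sup_{t\in\mathbb{R}}\sum_{k\in\mathbb{Z}}|\psi(t-k\alpha)|\le C_{\alpha}\norm{\psi}_{W},
\]
valid for $\psi\in W$ and any $\alpha>0$. This immediately gives absolute pointwise convergence of the series together with the bound $|g(t)|\le C_{\alpha}\norm{a}_{\infty}\norm{\psi}_{W}$, so $g$ is bounded (almost-periodic sequences being bounded). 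Continuity of $g$ then follows from continuity of $\psi\in W_{0}$: on any compact interval $[-R,R]$, the tail $\sum_{|k|>N}|\psi(t-k\alpha)|$ is controlled uniformly in $t$ by $\sum_{|j|\gtrsim N\alpha-R}\sup_{[j,j+1]}|\psi|$, which tends to zero with $N$, so the partial sums converge uniformly on compacta to $g$.

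For Bohr's translation condition I would fix $\varepsilon>0$ and use the almost-periodicity of $a$ to produce $N\in\mathbb{N}$ such that every block of $N$ consecutive integers contains some $p$ with $\sup_{n}|a_{n+p}-a_{n}|\le\varepsilon$. The arithmetic key is the reindexing
\[
g(t+p\alpha)-g(t)=\sum_{k\in\mathbb{Z}}(a_{k+p}-a_{k})\psi(t-k\alpha),
\]
which combined with the Wiener bound yields $\norm{g(\cdot+p\alpha)-g}_{\infty}\le C_{\alpha}\norm{\psi}_{W}\varepsilon$. Since every real interval of length $N\alpha$ meets $\alpha\mathbb{Z}$ in at least $N$ consecutive multiples of $\alpha$, some translate $p\alpha$ of the desired kind sits inside it, so Bohr's condition is verified and (a) follows.

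For (b) the naive estimate $\norm{g}_{AP}\le\norm{g}_{\infty}\le C_{\alpha}\norm{a}_{\infty}\norm{\psi}_{W}$ is \emph{not} enough, and this is the main obstacle: the Besicovich norm $\norm{a}_{AP(\mathbb{Z})}$ is strictly weaker than $\norm{a}_{\infty}$, so I must bound $\norm{g}_{AP}$ by the former. My plan is to compute $\norm{g}_{AP}^{2}=\lim_{T\to\infty}(2T)^{-1}\int_{-T}^{T}|g|^{2}\,dt$ directly via Cauchy--Schwarz on the defining sum:
\[
|g(t)|^{2}\le\Big(\sum_{k}|\psi(t-k\alpha)|\Big)\Big(\sum_{k}|a_{k}|^{2}|\psi(t-k\alpha)|\Big)\le C_{\alpha}\norm{\psi}_{W}\sum_{k}|a_{k}|^{2}|\psi(t-k\alpha)|.
\]
Fubini then rewrites the time integral as
\[
(2T)^{-1}\int_{-T}^{T}|g(t)|^{2}\,dt\le C_{\alpha}\norm{\psi}_{W}\int_{\mathbb{R}}|\psi(s)|\cdot(2T)^{-1}\!\!\sum_{-T-s\le k\alpha\le T-s}\!\!|a_{k}|^{2}\,ds.
\]
For each fixed $s$, translation invariance of the almost-periodic mean on $\mathbb{Z}$ drives the inner Ces\`{a}ro average to $\alpha^{-1}\norm{a}_{AP(\mathbb{Z})}^{2}$ as $T\to\infty$, independently of $s$; since $\psi\in W\subset L^{1}$, dominated convergence (with integrable majorant $\alpha^{-1}|\psi(s)|\norm{a}_{\infty}^{2}$) lets me pass the limit inside and produces
\[
\norm{g}_{AP}^{2}\le\alpha^{-1}C_{\alpha}\norm{\psi}_{W}\norm{\psi}_{L^{1}}\norm{a}_{AP(\mathbb{Z})}^{2}\le C\norm{\psi}_{W}^{2}\norm{a}_{AP(\mathbb{Z})}^{2},
\]
which is the required bilinear continuity.
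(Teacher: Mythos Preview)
Your argument is correct. Part~(a) follows the same strategy as the paper---direct verification of Bohr's condition via the Wiener bound---though your reindexing $g(t+p\alpha)-g(t)=\sum_{k}(a_{k+p}-a_{k})\psi(t-k\alpha)$ is cleaner than the paper's explicit truncation-and-$\epsilon$-splitting.

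Part~(b), however, takes a genuinely different route. The paper proceeds indirectly: it establishes \emph{separate} continuity of $B$ (invoking the closed graph theorem for continuity in $\psi$; and for continuity in $a$ first treating compactly supported $\psi$ via the Bessel-type estimate \cite[6.2.2]{Groc01}, then passing to general $\psi\in W_{0}$ by density), and finally appeals to an abstract result \cite[5.1.4]{jarchow} that separate continuity of a bilinear map on suitable spaces implies joint continuity. Your approach sidesteps all of this machinery: the weighted Cauchy--Schwarz step
\[
|g(t)|^{2}\le\Big(\sum_{k}|\psi(t-k\alpha)|\Big)\Big(\sum_{k}|a_{k}|^{2}|\psi(t-k\alpha)|\Big)
\]
followed by Fubini and dominated convergence yields the \emph{joint} estimate $\norm{B(a,\psi)}_{AP}\le C\norm{\psi}_{W}\norm{a}_{AP(\mathbb{Z})}$ directly. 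This is more elementary and delivers the bilinear bound in one stroke; the paper's argument, on the other hand, is structured so that its core $L^{2}$/truncation technique can be reused verbatim in the later Gabor Bessel estimate (Theorem~\ref{thm:besselW}). One cosmetic correction: your dominated-convergence majorant should be $(\alpha^{-1}+1)|\psi(s)|\norm{a}_{\infty}^{2}$ rather than $\alpha^{-1}|\psi(s)|\norm{a}_{\infty}^{2}$, since the number of lattice points $k$ with $-T-s\le k\alpha\le T-s$ can exceed $2T/\alpha$ by one; this does not affect the argument.
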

\begin{proof} (a) We first consider the case $\alpha = 1.$ We fix $\epsilon > 0$ and
find $L\in {\mathbb N}$ such that
$$
\sum_{|k| > L}\sup_{x\in [0,1]}\left|\psi(x-k)\right| <
\frac{\epsilon}{\norm{a}_{\infty}}.
$$ Now choose $N\in {\mathbb N}$ such that among any $N$ consecutive integers
there exists an integer $p$ with the property
$$
\left|a_{k+p} - a_k\right| < \frac{\epsilon}{(2L+1)\norm{\psi}_{\infty}}\ \
\forall k\in {\mathbb Z}.
$$ Then, for every $t\in [\ell, \ell +1],$
$$
\begin{array}{ll}
\begin{displaystyle}\left|g(t) - \sum_{|k|\leq
L}a_{k+\ell}T_{k+\ell}\psi(t)\right| \end{displaystyle}& \begin{displaystyle}=
\left|\sum_{|k|> L}a_{k+\ell}T_{k+\ell}\psi(t)\right|\end{displaystyle}\\ & \\ &
\begin{displaystyle}\leq \norm{a}_{\infty}\sum_{|k| > L}\sup_{x\in
[0,1]}\left|\psi(x-k)\right| < \epsilon\end{displaystyle}
\end{array}
$$ and also
$$
\left|g(t+p) - \sum_{|k|\leq L}a_{k+\ell+p}T_{k+\ell+p}\psi(t+p)\right| <
\epsilon
$$ for every $p\in {\mathbb Z}.$ Then every interval of length $N$ of the real
line contain at least one $3\epsilon$-period of $g$ since, for $t\in [\ell, \ell
+1],$
$$
\left|g(t+p) - g(t)\right| \leq 2\epsilon + \sum_{|k|\leq L}\left|a_{k+\ell+p} -
a_{k+\ell}\right|\cdot \left|T_{k+\ell}\psi(t)\right| < 3\epsilon.
$$
\par\medskip
In the general case, we consider $\varphi(t) = \psi(t\alpha)$ and observe that
$$
T_{k}\varphi(\frac{t}{\alpha}) = \psi(t-k\alpha) = T_{k\alpha}\psi(t)
$$ and
$$
g(t) = \big(\sum_{k\in {\mathbb Z}}a_{k}T_k\varphi\big)(\frac{t}{\alpha})
$$ is almost periodic.
\par\medskip
(b) According to \cite[5.1.4]{jarchow} it suffices to show that $B$ is separately continuous. To this end we first fix $a\in AP({\mathbb Z}).$ For every compact set $K\subset {\mathbb R}$ there is $C > 0$ such that
$$
\sup_{t\in K}\left|\sum_{k\in {\mathbb Z}}a_k T_{k\alpha}\psi(t)\right| \leq C
\norm{\psi}_{W_0}.
$$ That is, the map $B(a,\cdot):W_0\to C({\mathbb R})$ is continuous. Then, an
application of the closed graph theorem gives the continuity of $$B(a,\cdot):W_0\to (AP({\mathbb R}),\norm{\cdot}_{\infty}).$$ Since $\norm{f}_{AP}\leq \norm{f}_{\infty}$ for every $f\in AP({\mathbb R})$ we conclude that also $B(a,\cdot):W_0\to (AP({\mathbb R}),\norm{\cdot}_{AP})$ is continuous.
\par\medskip
We now fix $\psi\in W_0$ and prove the continuity of $$B(\cdot,\psi):AP({\mathbb Z})\to (AP({\mathbb R}),\norm{\cdot}_{AP}).$$ We first assume that $\psi$ vanishes outside
$[-\alpha M, \alpha M]$ and apply \cite[6.2.2]{Groc01} to get a positive constant $C,$ independent of $\psi,$ such that
$$
\norm{\sum_{k} b_k T_{\alpha k}\psi}_{2}^2 \leq C^2 \norm{\psi}_W^2 \cdot
\sum_{k} \left|b_k\right|^2
$$ for every finite sequence $\left(b_k\right)_k.$ We now consider $a = \left(a_k\right)_{k\in {\mathbb Z}}\in
AP({\mathbb Z})$ and estimate
$$
\begin{array}{ll}
&\begin{displaystyle}\norm{\sum_{k=-N}^N a_k T_{\alpha k}\psi}_{2} =
\left(\int_{-\alpha(M +  N)}^{\alpha(M + N)}\left|\sum_{k=-N}^N a_kT_{\alpha
k}\psi(t)\right|^2\ dt\right)^{\frac{1}{2}}\end{displaystyle} \\ & \\ &
\begin{displaystyle}\geq \left(\int_{-\alpha(M + N)}^{\alpha(M +
N)}\left|B(a,\psi)(t)\right|^2\ dt\right)^{\frac{1}{2}} - \left(\int_{-\alpha(M
+ N)}^{\alpha(M + N)}\left|\sum_{N < |k| < N+2M} a_kT_{\alpha
k}\psi(t)\right|^2\ dt\right)^{\frac{1}{2}}.\end{displaystyle}
\end{array}
$$ Since
$$
\int_{-\infty}^{\infty}\left|\sum_{N < |k| < N+2M} a_kT_{\alpha
k}\psi(t)\right|^2\ dt \leq C^2 \norm{\psi}_W^2\cdot \sum_{N < |k| < N+2M}
\left|a_k\right|^2
$$ and
$$
\frac{1}{2M}\sum_{N < |k| < N+2M} \left|a_k\right|^2
$$ is uniformly bounded, it turns out that
$$
\lim_{N\to \infty}\frac{1}{2N+1}\int_{-\alpha(M + N)}^{\alpha(M +
N)}\left|B(a,\psi)(t)\right|^2\ dt
$$ is less than or equal to
$$
C^2  \norm{\psi}_W^2\cdot \lim_{N\to
\infty}\frac{1}{2N+1}\sum_{k=-N}^N\left|a_k\right|^2 = C^2\norm{\psi}_W^2\cdot
\norm{a}_{AP}^2.
$$ Finally
$$
\norm{B(a,\psi)}_{AP}^2 = \lim_{T\to
\infty}\frac{1}{2T}\int_{-T}^T\left|B(a,\psi)(t)\right|^2\ dt \leq
\frac{C^2}{\alpha}\norm{\psi}_W^2\cdot \norm{a}_{AP}^2
$$ where $C$ is a constant which does not depend on $\psi$ or $a.$ In the general case that $\psi$ is not necessarily compactly supported we consider a sequence $\psi_n\in W_0$ of compactly supported functions converging to $\psi$ in $W_0.$ Since the map
$$
B(a,\cdot):W_0\to (AP({\mathbb R}),\norm{\cdot}_{AP})
$$ is continuous we get
$$
\norm{B(a,\psi)}^2_{AP} = \lim_{n\to
\infty}\norm{B(a,\psi_n}^2_{AP}\leq
\frac{C^2}{\alpha}\norm{\psi}_W^2\cdot \norm{a}_{AP}^2.
$$
\end{proof}

As mentioned in Section 1, we denote by $e_{\lambda}$ the
AP-function $e_{\lambda}(t) = e^{i\lambda t}$ and by
$\tilde{e}_{\lambda}$ the AP-sequence $\tilde{e}_{\lambda}(k) =
e^{i\lambda k},$ $k\in {\mathbb Z}$, and observe that
$\tilde{e}_{\lambda + 2\pi} = \tilde{e}_{\lambda}.$

\begin{proposition}\label{prop:fourier}{\rm
For every $a = \left(a_k\right)_{k\in {\mathbb Z}}\in
AP({\mathbb Z})$ and $\psi\in W_0,$ the $\mu$-Fourier coefficient of
$\begin{displaystyle}\sum_k a_kT_{k\alpha}\psi\end{displaystyle}$ is
$$
c_\mu = \alpha^{-1}\widehat{\psi}(\mu) \left(a, \tilde{e}_{\mu
\alpha}\right)_{AP({\mathbb Z})}.$$
}
\end{proposition}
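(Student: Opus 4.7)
The plan is to first verify the formula by direct computation when $\psi$ is compactly supported, and then extend to general $\psi \in W_0$ by density and continuity, leveraging the bilinear continuity established in Theorem~\ref{thm:fromsequencetofunction}(b).

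First suppose $\psi \in W_0$ vanishes outside $[-\alpha M, \alpha M]$. Since $\psi$ has compact support, at each fixed $t$ only finitely many terms of the series defining $g = \sum_k a_k T_{k\alpha}\psi$ are nonzero, so Fubini gives
$$
\int_{-T}^T g(t) e^{-i\mu t}\, dt \;=\; \sum_k a_k \int_{-T}^T \psi(t - k\alpha) e^{-i\mu t}\, dt.
$$
Specializing to $T = N\alpha$, for $|k| \le N - M$ the function $T_{k\alpha}\psi$ is supported in $[-T, T]$, so a change of variable shows the inner integral equals $e^{-i\mu k\alpha}\widehat{\psi}(\mu)$. Only the at most $4M$ boundary indices $k$ with $N - M < |k| \le N + M$ produce a correction; each such term is bounded by $\norm{a}_{\infty}\norm{\psi}_1$, giving a global $O(1)$ error which vanishes after division by $2T = 2N\alpha$. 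Dividing by $2T$ and letting $N \to \infty$ yields
$$
c_\mu \;=\; \frac{\widehat{\psi}(\mu)}{\alpha} \lim_{N\to \infty} \frac{1}{2N} \sum_{k=-N}^N a_k e^{-i\mu k\alpha} \;=\; \alpha^{-1}\widehat{\psi}(\mu)\, (a, \tilde{e}_{\mu\alpha})_{AP({\mathbb Z})}.
$$
A short verification confirms that passing from the subsequence $T = N\alpha$ to an arbitrary $T \to \infty$ is harmless, since $g$ is bounded on $\mathbb{R}$ in the compactly supported case.

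For general $\psi \in W_0$, note that $W_0$ embeds continuously into $L^1(\mathbb{R})$ and that compactly supported elements are dense in $W_0$ (truncation by a continuous bump $\chi_n$ supported in $[-n-1, n+1]$ produces $\psi_n \to \psi$ in $W_0$, with error controlled by the tail $\sum_{|k|\geq n}\sup_{[0,1]}|\psi(x-k)|$). Choose such $\psi_n$. By Theorem~\ref{thm:fromsequencetofunction}(b), $B(a,\psi_n) \to B(a,\psi)$ in $(AP(\mathbb{R}), \norm{\cdot}_{AP})$, and the $\mu$-th Fourier coefficient is a bounded linear functional in this norm, so $c_\mu(B(a,\psi_n)) \to c_\mu(B(a,\psi))$. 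On the right-hand side, $\widehat{\psi_n}(\mu) \to \widehat{\psi}(\mu)$ by continuity of the Fourier transform on $L^1$. Passing to the limit in the identity already established for each $\psi_n$ finishes the proof.

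The main obstacle is the bookkeeping in the compactly supported step: one must identify the boundary indices precisely and verify that the factor $\alpha^{-1}$ arises correctly from the mismatch between the continuous Bohr average $(2T)^{-1}\int_{-T}^T$ and the discrete Bohr average $(2N)^{-1}\sum_{k=-N}^N$. Choosing $T = N\alpha$ makes this matching transparent and isolates the boundary error as a harmless $O(1/N)$ term.
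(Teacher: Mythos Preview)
Your proof is correct and follows essentially the same two-step strategy as the paper: first verify the formula directly for compactly supported $\psi$ by computing the Bohr mean along the subsequence $T=N\alpha$, then extend to general $\psi\in W_0$ by density using the continuity of $B(a,\cdot)$ from Theorem~\ref{thm:fromsequencetofunction}(b) and the continuous inclusion $W_0\hookrightarrow L^1$. Your treatment of the boundary indices in the compactly supported step is in fact slightly more explicit than the paper's.
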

\begin{proof}
(I) We assume at first that $\psi$ is continuous and
supported on $[-q\alpha, q\alpha].$ Then, for every $k\in {\mathbb Z},$
 $$
 \sup T_{k\alpha}\psi \subset [(-q+k)\alpha, (q+k)\alpha].
 $$ Consequently, for any $N > q,$
 $$
 \begin{array}{ll}
 \begin{displaystyle} \int_{-N\alpha}^{N\alpha}\left(\sum_k a_kT_{k\alpha}\psi\right)(t)e^{-i\mu t}\
dt \end{displaystyle} & \begin{displaystyle}=
\int_{-N\alpha}^{N\alpha}\left(\sum_{|k|\leq N-q}a_kT_{k\alpha}\psi\right)(t)e^{-i\mu t}\
dt\end{displaystyle}\\ & \\ & \begin{displaystyle}= \sum_{|k|\leq N-q}a_k
\int_{-\infty}^{\infty}\left(T_{k\alpha}
\psi\right)(t)e^{-i\mu t}\ dt\end{displaystyle} \\ & \\ & =
\begin{displaystyle}\widehat{\psi}(\mu )\sum_{|k|\leq N-q}a_k e^{-i \mu k\alpha}.
\end{displaystyle}
 \end{array}
 $$ Since
 $$
 c_\mu = \lim_{N\to \infty}\frac{1}{2N\alpha}\int_{-N\alpha}^{N\alpha}\left(\sum_k
a_kT_{k\alpha}\psi\right)(t) e^{-i\mu t}\ dt,
 $$ the conclusion follows.

(II) We consider now the general case $\psi\in W_0$.
According to Theorem \ref{thm:fromsequencetofunction} (b), for every $a = \left(a_k\right)_{k\in {\mathbb Z}}\in AP({\mathbb Z})$ and $\mu\in {\mathbb R},$
the map
$$
\psi\in W_0\mapsto \mu\mbox{-Fourier coefficient of}\  \sum_k a_kT_{k\alpha}\psi
$$ is continuous. Since $W_0\subset L^1({\mathbb R})$ with continuous inclusion,
also
$$
\psi\in W_0\mapsto \alpha^{-1}\widehat{\psi}(\mu )
$$ is continuous. Since the continuous functions with compact support are a
dense subspace of $W_0,$ an application of part (I) gives the
conclusion.
\end{proof}

The following observation will play an important role in what follows.

\begin{lemma}\label{cor:1}{\rm Let $\psi\in W \cup \widehat{W_0}$} and
$\gamma > 0$ be given. Then
$$
\sup_{\lambda}\sum_k\left|\widehat{\psi}(\lambda + \gamma k)\right|^2 < \infty.
$$

\end{lemma}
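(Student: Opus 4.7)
The statement has two cases corresponding to the two pieces of the union $W\cup \widehat{W_0}$, and the plan is to treat them separately.

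For $\psi\in \widehat{W_0}$, one can write $\psi=\widehat{\phi}$ with $\phi\in W_0\subset L^1\cap L^2$, and Fourier inversion gives $\widehat{\psi}(\mu)=2\pi\phi(-\mu)$. Thus, up to reflection and a constant, $\widehat{\psi}$ itself lies in $W_0\subset W$, and the claim reduces to the elementary estimate
\[
\sup_{\lambda\in\mathbb R}\sum_{k\in\mathbb Z}|\phi(\lambda+\gamma k)|^{2} \le \|\phi\|_\infty \sup_{\lambda\in\mathbb R}\sum_{k\in\mathbb Z}|\phi(\lambda+\gamma k)| < \infty,
\]
valid for any $\phi\in W$ and any $\gamma>0$. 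I would prove the last inequality by grouping the lattice points $\gamma k$ according to which unit interval $[n,n+1]$ they fall into (each such interval contains at most $\lfloor 1/\gamma\rfloor+1$ of them) and then invoking the definition of $\|\cdot\|_{W}$.

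For $\psi\in W$, my plan is to reduce the sum to a Fourier series via Poisson summation applied to the autocorrelation
\[
g(t):=(\psi\ast\tilde\psi)(t),\qquad \tilde\psi(s):=\overline{\psi(-s)},
\]
which satisfies $\widehat{g}=|\widehat\psi|^{2}$. Since $\psi\in W\subset L^1\cap L^2$, one has $g\in L^1\cap C(\mathbb R)$ and $\widehat g=|\widehat\psi|^{2}\in L^1$; the Poisson summation formula should then yield
\[
\sum_{k\in\mathbb Z}|\widehat\psi(\lambda+\gamma k)|^{2} =\frac{2\pi}{\gamma}\sum_{m\in\mathbb Z} g\!\left(\tfrac{2\pi m}{\gamma}\right)e^{-2\pi i m\lambda/\gamma},
\]
whose modulus is bounded uniformly in $\lambda$ by $\frac{2\pi}{\gamma}\sum_m |g(2\pi m/\gamma)|$ provided that last series converges.

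The main obstacle is therefore to show that $g\in W$, i.e.\ that $W$ is stable under convolution, so that $\sum_m |g(2\pi m/\gamma)|<\infty$. I would prove this directly by decomposing $\psi=\sum_n\psi\chi_{[n,n+1]}$ and bounding
\[
\sup_{t\in[k,k+1]}|g(t)|\le C\sum_{n\in\mathbb Z} a_n\,a_{n-k},\qquad a_n:=\mathrm{ess\,sup}_{[0,1]}|\psi(\cdot-n)|,
\]
by keeping only those pairs of support intervals that actually meet; summing in $k$ via Young's inequality for $\ell^{1}$-sequences then gives $\|g\|_{W}\le C\|\psi\|_{W}^{2}$. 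Once $g\in W$ is in hand, $g\in L^{1}\cap C(\mathbb R)$ and $\widehat g\in L^{1}$ together place us in a standard regime for the Poisson summation identity, and the bound on $\sup_{\lambda}\sum_{k}|\widehat\psi(\lambda+\gamma k)|^{2}$ follows.
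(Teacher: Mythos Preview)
Your treatment of $\psi\in\widehat{W_0}$ matches the paper's: both amount to observing that $\widehat\psi$ lies (up to reflection and a constant) in $W_0$, hence so does $|\widehat\psi|^2$ since $W$ is a Banach algebra under pointwise multiplication, and then invoking the periodization bound.

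For $\psi\in W$ the two routes diverge. The paper stays on the Fourier side throughout: it cites a result of Fournier--Stewart to the effect that $|\widehat\psi|^2\in W_0$ whenever $\psi\in W$, after which the periodization lemma applies exactly as in the first case. You instead work on the time side, showing that the autocorrelation $g=\psi*\tilde\psi$ lies in $W$ (your decomposition argument is essentially a direct proof that $W*W\subset W_0$) and then transferring the estimate to the frequency side via Poisson summation. Your route is self-contained and yields an explicit constant in terms of $\norm{\psi}_W$; the paper's is a two-line appeal to the amalgam-space literature.

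One point deserves tightening. The hypotheses you name for Poisson summation, namely $g\in L^1\cap C(\mathbb R)$ and $\widehat g\in L^1$, are \emph{not} by themselves a standard regime for the pointwise identity; there are classical counterexamples. What saves you is that you actually have more: $g\in W_0$, so $\sum_m|g(2\pi m/\gamma)|<\infty$, and $\widehat g=|\widehat\psi|^2\ge 0$. One clean way to close the gap is to periodize $e^{i\lambda(\cdot)}g$ to a continuous function on the circle, note that its Fourier coefficients $\tfrac{\gamma}{2\pi}\widehat g(\gamma k-\lambda)$ are non-negative, deduce their absolute summability from convergence of the Fej\'er means at the origin, and thereby recover the identity pointwise. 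Alternatively, use lower semicontinuity of $\lambda\mapsto\sum_k\widehat g(\lambda+\gamma k)$ together with the almost-everywhere equality with the absolutely convergent Fourier series. Either way, the positivity of $\widehat g$ is doing real work and should be made explicit.
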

\begin{proof} The case $\psi\in \widehat{W_0}$ is well-known and follows from \cite[6.1.2]{Groc01} and the fact that $W$ is a Banach algebra with respect to pointwise multiplication. For $\psi\in W$ it suffices to use that $|\widehat{\psi}|^2\in W_0$ by \cite[2.8]{Fournier_Stewart}, and to apply again \cite[6.1.2]{Groc01}.\end{proof}


\par\medskip
We will denote by $\left<\cdot, \cdot\right>$ the usual duality involving $L^p$ spaces,
$$
\left<f, g\right> = \int_{-\infty}^{+\infty}f(t)\overline{g(t)}\ dt.
$$
\par\medskip
\begin{lemma}\label{lem:periodizationexp}{\rm \
\par
(a) For every $f\in AP({\mathbb R})$ and $\psi\in L^1({\mathbb R})$
the sequence
$$
\left(\left<f, T_{k\alpha}\psi\right>\right)_{k\in {\mathbb Z}}
$$ is almost periodic and the map
$$
S:\left(AP({\mathbb R}), \norm{\cdot}_{\infty}\right)\to \ \left(AP({\mathbb Z}), \norm{\cdot}_{\infty}\right),\ f\mapsto \left(\left<f, T_{k\alpha}\psi\right>\right)_k,
$$ is continuous.
\par
(b) For every $\lambda\in {\mathbb R},$
$$
\left(\left<e_{\lambda}, T_{k\alpha}\psi\right>\right)_k =
\overline{\widehat{\psi}(\lambda)}\cdot\tilde{e}_{\lambda \alpha}\in
AP({\mathbb Z}).
$$
\par
(c) Let $f$ be trigonometric polynomial. Then
$$
 \left(<f, T_{k\alpha}\psi>\right)_k = \sum_{\lambda \in [0,\frac{2\pi}{\alpha})}\left(\sum_p
\widehat{f}(\lambda + \frac{2\pi}{\alpha} p)\cdot\overline{\widehat{\psi}(\lambda +
\frac{2\pi}{\alpha} p)}\right)\tilde{e}_{\lambda \alpha}.
 $$
 }
\end{lemma}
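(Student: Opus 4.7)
The plan is as follows. For part (a), I would first observe that if one sets $\tilde\psi(t):=\overline{\psi(-t)}$, then $\langle f, T_\tau\psi\rangle = (f*\tilde\psi)(\tau)$, so the sequence of interest is the sampling at $\alpha\mathbb{Z}$ of the single function $\Phi:=f*\tilde\psi$ on $\mathbb{R}$. To see that $\Phi\in AP(\mathbb{R})$ I would estimate, after a change of variable inside the integral,
\[
|\Phi(\tau+h)-\Phi(\tau)| = \left|\int (f(s+h)-f(s))\,\overline{\psi(s-\tau)}\,ds\right|\le \norm{T_{-h}f-f}_{\infty}\cdot\norm{\psi}_{1},
\]
so every $\epsilon$-period of $f$ is automatically an $(\epsilon\norm{\psi}_1)$-period of $\Phi$. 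Since $AP(\mathbb{R})$ is closed under the dilation $\Phi(\cdot)\mapsto \Phi(\alpha\,\cdot)$, the function $t\mapsto\Phi(\alpha t)$ also lies in $AP(\mathbb{R})$, and its restriction to $\mathbb{Z}$, which is precisely $(\Phi(k\alpha))_k=(\langle f, T_{k\alpha}\psi\rangle)_k$, belongs to $AP(\mathbb{Z})$ by the characterization of AP sequences as restrictions of AP functions recalled in Section~1. Continuity of $S$ in the uniform norms is then immediate from the pointwise bound $|\langle f, T_{k\alpha}\psi\rangle|\le \norm{f}_\infty\cdot\norm{\psi}_1$.

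For part (b), I would just compute directly: in $\int e^{i\lambda t}\overline{\psi(t-k\alpha)}\,dt$ the substitution $s=t-k\alpha$ factors out $e^{i\lambda k\alpha}$ and leaves $\overline{\int e^{-i\lambda s}\psi(s)\,ds}=\overline{\widehat{\psi}(\lambda)}$, which is the stated identity. Part (c) then follows from (b) by linearity: writing $f=\sum_j c_j e_{\lambda_j}$ one has
\[
(\langle f, T_{k\alpha}\psi\rangle)_k \;=\; \sum_j c_j\,\overline{\widehat{\psi}(\lambda_j)}\,\tilde e_{\lambda_j\alpha}.
\]
Now $\tilde e_{(\lambda+2\pi/\alpha)\alpha}=\tilde e_{\lambda\alpha}$ as sequences, because the two exponentials differ by the factor $e^{2\pi i k}=1$. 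I would therefore regroup the indices $j$ according to the unique representative of $\lambda_j$ in $[0,2\pi/\alpha)$ modulo $2\pi/\alpha$; the inner sum then packages into $\sum_p \widehat{f}(\lambda+\tfrac{2\pi}{\alpha}p)\overline{\widehat{\psi}(\lambda+\tfrac{2\pi}{\alpha}p)}$, which is a finite sum because $f$ has only finitely many frequencies. This gives exactly the stated formula.

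The only mildly delicate point is the passage from $\Phi\in AP(\mathbb{R})$ to the sampled sequence $(\Phi(k\alpha))_k\in AP(\mathbb{Z})$, which relies on dilation-invariance of $AP(\mathbb{R})$ combined with the characterization of $AP(\mathbb{Z})$ as restrictions of $AP$ functions to $\mathbb{Z}$; everything else reduces to a convolution estimate for (a), a single substitution for (b), and bookkeeping of frequencies modulo $2\pi/\alpha$ for (c).
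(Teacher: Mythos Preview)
Your proof is correct and follows essentially the same route as the paper: both recognize $\langle f,T_{k\alpha}\psi\rangle$ as the value at $k$ of the function $t\mapsto (f\ast\tilde\psi)(\alpha t)\in AP(\mathbb{R})$, invoke the characterization of $AP(\mathbb{Z})$ as restrictions of $AP(\mathbb{R})$ functions, and deduce continuity from the elementary bound $|\langle f,T_{k\alpha}\psi\rangle|\le\norm{f}_\infty\norm{\psi}_1$; parts (b) and (c) are handled identically by direct computation and regrouping modulo $2\pi/\alpha$. The only difference is that you spell out explicitly why $f\ast\tilde\psi\in AP(\mathbb{R})$ via the $\epsilon$-period estimate, whereas the paper simply asserts it.
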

\begin{proof}
(a) $\left<f, T_{k\alpha}\psi\right> = g(k)$ where $g$ is the continuous and almost periodic function $g(t) = \left(f\ast \check{\psi}\right)(t \alpha).$ Since
\begin{equation}\label{*}
\left|\left<f, T_{k\alpha}\psi\right>\right| \leq \norm{f}_\infty\cdot\norm{\psi}_1
\end{equation}

then $S$ is continuous.
\par
(b) Follows from
$$
\left<e_{\lambda}, T_{k\alpha}\psi\right> = e^{i\lambda k
\alpha}\overline{\widehat{\psi}(\lambda)}
$$ while (c) is a direct consequence of (b). As $a_\lambda\ne 0$ only for finitely many $\lambda$,
the sum in (c) is finite.
\end{proof}

Actually from \eqref{*} and the fact that $\norm{\cdot}_{AP(\mathbb Z)}\le \norm{\cdot}_\infty$, we have continuity of the sesquilinear map
$$
S:\left(AP({\mathbb R}), \norm{\cdot}_{\infty}\right)\times L^1(\mathbb R) \to \ \left(AP({\mathbb Z}), \norm{\cdot}_{AP(\mathbb Z)}\right),\ f\mapsto \left(\left<f, T_{k\alpha}\psi\right>\right)_k.
$$

\par\medskip
Next we analyze under which conditions on $\psi$ the previous map $S$ is continuous in the case that $AP({\mathbb R})$ is endowed with the almost periodic norm $\norm{\cdot}_{AP}.$
\par\medskip

\begin{proposition}\label{prop:only_trans}{\rm Given $\psi \in L^1(\mathbb{R}),$ the map $$S:\left(AP(\mathbb{R}),\norm{\cdot}_{AP}\right)\to \left( AP(\mathbb{Z}), \norm{\cdot}_{AP(\mathbb Z)} \right),\, f \to
(\langle f, T_{k\alpha}\psi \rangle )_k,$$ is continuous if and only if
$$\sup_{\lambda} \sum_{k}|\widehat{\psi}(\lambda+2 \pi
\frac{k}{\alpha})|^2<\infty.$$

In particular, this is the case when $\psi \in W$ or $\psi \in L^1(\mathbb{R})\cap \widehat{W}.$
}
\end{proposition}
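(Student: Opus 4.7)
The plan is to reduce everything to trigonometric polynomials via the explicit periodization formula in Lemma~\ref{lem:periodizationexp}(c), making essential use of the orthogonality of the sequences $\tilde{e}_{\lambda\alpha}$, $\lambda \in [0, 2\pi/\alpha)$, in $AP({\mathbb Z})$, and then to extend by $\|\cdot\|_\infty$-density of the trigonometric polynomials in $AP({\mathbb R})$. The whole argument hinges on the observation that the sampling map $S$ collapses frequencies within each coset modulo $2\pi/\alpha$: if $\mu_p := \lambda + \tfrac{2\pi p}{\alpha}$, then $e^{i\mu_p k\alpha} = e^{i\lambda k\alpha}$, so $\tilde{e}_{\mu_p\alpha} = \tilde{e}_{\lambda\alpha}$ for every $p,k \in {\mathbb Z}$.

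For \emph{sufficiency}, set $M := \sup_\lambda \sum_k |\widehat{\psi}(\lambda + 2\pi k/\alpha)|^2$ and assume $M < \infty$. For any trigonometric polynomial $f$, Lemma~\ref{lem:periodizationexp}(c) expresses $Sf$ as a \emph{finite} orthogonal expansion in $AP({\mathbb Z})$, and by Cauchy--Schwarz together with the $AP$-Parseval identity I would obtain
$$
\|Sf\|_{AP({\mathbb Z})}^2 = \sum_{\lambda \in [0, 2\pi/\alpha)}\Bigl|\sum_p \widehat{f}(\mu_p) \overline{\widehat{\psi}(\mu_p)}\Bigr|^2 \leq M \sum_{\mu \in {\mathbb R}} |\widehat{f}(\mu)|^2 = M \|f\|_{AP}^2.
$$
To pass from trigonometric polynomials to arbitrary $f \in AP({\mathbb R})$, I would approximate $f$ uniformly by trigonometric polynomials $f_n$; since $\|f_n - f\|_{AP} \leq \|f_n - f\|_\infty \to 0$ and $\|Sf_n - Sf\|_{AP({\mathbb Z})} \leq \|S(f_n-f)\|_\infty \leq \|\psi\|_1 \|f_n - f\|_\infty \to 0$ by Lemma~\ref{lem:periodizationexp}(a) and estimate \eqref{*}, the bound $\|Sf\|_{AP({\mathbb Z})} \leq \sqrt{M}\|f\|_{AP}$ passes to the limit.

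For \emph{necessity}, assume $S$ has operator norm $C$. I would fix $\lambda \in [0, 2\pi/\alpha)$ and $N \in {\mathbb N}$ and test against the trigonometric polynomial
$$
f_N := \sum_{|p|\leq N} \widehat{\psi}(\mu_p)\, e_{\mu_p}.
$$
The coset collapse noted above, combined with Lemma~\ref{lem:periodizationexp}(b), gives $S f_N = s_N(\lambda)\, \tilde{e}_{\lambda \alpha}$ with $s_N(\lambda) := \sum_{|p| \leq N} |\widehat{\psi}(\mu_p)|^2$, while $\|f_N\|_{AP}^2 = s_N(\lambda)$ by Parseval. Since $\|\tilde{e}_{\lambda\alpha}\|_{AP({\mathbb Z})} = 1$, the continuity hypothesis yields $s_N(\lambda) \leq C\sqrt{s_N(\lambda)}$, hence $s_N(\lambda) \leq C^2$ uniformly in $\lambda$ and $N$; letting $N \to \infty$ gives the required bound. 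The ``in particular'' clause then follows from Lemma~\ref{cor:1} applied with $\gamma = 2\pi/\alpha$: directly for $\psi \in W$, and for $\psi \in L^1({\mathbb R}) \cap \widehat{W}$ after noting that Fourier inversion places $\widehat{\psi}$ in $W$, to which the same periodization estimate applies.

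The main point to watch is the coset bookkeeping: one must verify that $\mu \mapsto \tilde{e}_{\mu\alpha}$ is injective on the fundamental domain $[0, 2\pi/\alpha)$ (so that Lemma~\ref{lem:periodizationexp}(c) genuinely produces an orthogonal expansion) and that the concentrated polynomial $f_N$ in the necessity argument indeed reproduces the entire partial sum $s_N(\lambda)$ on the $AP({\mathbb Z})$ side. Once this identification is in place, both directions reduce to a one-line Cauchy--Schwarz argument and the remaining extension by density is routine.
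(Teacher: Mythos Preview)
Your proof is correct. The necessity argument is essentially the paper's: both test the continuity of $S$ on trigonometric polynomials whose frequencies lie in a single coset $\lambda + \tfrac{2\pi}{\alpha}\mathbb{Z}$; the paper uses an arbitrary finite sequence of coefficients and invokes duality, while you make the specific optimal choice $a_p = \widehat{\psi}(\mu_p)$, which is equivalent.

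For sufficiency the two arguments differ in organization. You bound $\|Sf\|_{AP(\mathbb{Z})}^2$ directly on trigonometric polynomials via Cauchy--Schwarz in each coset, then pass to general $f\in AP(\mathbb{R})$ by $\|\cdot\|_\infty$-density together with the $\|\cdot\|_\infty$-continuity of $S$ from Lemma~\ref{lem:periodizationexp}(a). The paper instead constructs the formal adjoint $T:AP(\mathbb{Z})\to AP_2(\mathbb{R})$ by prescribing Fourier coefficients $(T(a),e_\lambda)_{AP_2}=\widehat{\psi}(\lambda)(a,\tilde{e}_{\lambda\alpha})_{AP(\mathbb{Z})}$, shows $\|T(a)\|_{AP}^2\le C\|a\|_{AP(\mathbb{Z})}^2$ (the same Cauchy--Schwarz computation, read from the dual side), and then checks that $S=T^\ast|_{AP(\mathbb{R})}$ via the same density argument. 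Your route is more self-contained for this proposition; the paper's has the payoff that the operator $T$ and the identification $S=\alpha T_\psi^\ast$ are reused in Remark~\ref{rem:transposed} and in the proofs of Theorems~\ref{thm:besselW} and~\ref{thm:besselfourierW}, where the synthesis viewpoint is the organizing principle.
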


\begin{proof} Let us assume that $S$ is continuous. For each $\lambda$ and each $a\in
\ell^2$ with finite support we consider the trigonometric polynomial $$f(t) = \sum_k a_k
e^{i(\lambda+2\pi \frac{k}{\alpha})t}.$$ Since $S(e^{i\mu t}) = \overline{\widehat{\psi}(\mu)}\tilde{e}_{\mu \alpha}$ then $$S(f) = \left(\sum_k a_k \overline{\widehat{\psi}}(\lambda+2\pi \frac{k}{\alpha})\right)\tilde{e}_{\lambda \alpha}$$
and $$\left|\sum_k a_k \overline{\widehat{\psi}}(\lambda+2\pi \frac{k}{\alpha})\right|\leq \norm{S(f)} \leq
\norm{S}\cdot\norm{f}_{AP} = \norm{S}\cdot \norm{a}_{\ell^2},$$ which implies that $$\sup_\lambda
\sum_{k}|\widehat{\psi}(\lambda+2 \pi \frac{k}{\alpha})|^2 \leq \norm{S}^2.$$

To prove the converse, consider the map $T:AP(\mathbb{Z})\to AP_2({\mathbb R})$
defined by assigning the Fourier coefficient
\begin{equation}\label{eq:op T}
\left(T(a), e_\lambda \right)_{AP_2}=
\widehat{\psi}(\lambda)\left(a, \tilde{e}_{\lambda
\alpha}\right)_{AP(\mathbb{Z})} \end{equation}
of $T(a)\in AP_2({\mathbb R})$, where $a\in AP(\mathbb Z)$. There is a countable set $\{\lambda_j\}_j$ contained in $[0,
\frac{2\pi}{\alpha})$ such that $\left(a, \tilde{e}_{\lambda
\alpha}\right)_{AP(\mathbb{Z})}\neq 0$ implies $\lambda = \lambda_j + 2\pi\frac{k}{\alpha}$ for some $j\in {\mathbb N}$ and $k\in {\mathbb Z}.$ Moreover, in this case,
$$
\left(a, \tilde{e}_{\lambda
\alpha}\right)_{AP(\mathbb{Z})} = \left(a, \tilde{e}_{\lambda_j
\alpha}\right)_{AP(\mathbb{Z})}.
$$ Consequently
$$
\begin{array}{ll}
\begin{displaystyle}\|T(a)\|_{AP}^2 \end{displaystyle} & \begin{displaystyle} = \sum_\lambda
|\widehat{\psi}(\lambda)\left(a, \tilde{e}_{\lambda
\alpha}\right)|^2 \end{displaystyle}\\ & \\ & \begin{displaystyle} = \sum_j |\left(a, \tilde{e}_{\lambda_j
\alpha}\right)|^2\left(\sum_k |\widehat{\psi}(\lambda_j + 2 \pi \frac{k}{\alpha}|^2\right)\end{displaystyle}\\ & \\ & \begin{displaystyle} \leq C \sum_j |\left(a, \tilde{e}_{\lambda_j
\alpha}\right)|^2\leq C\norm{a}^2_{AP(\mathbb{Z})},\end{displaystyle}\end{array}$$ which shows that $T$ is well defined and continuous. To finish we only need to check that $S$ is the restriction of $T^\ast$ to $AP({\mathbb R}).$ To this end we first observe that
$$
\left(a, T^\ast(e_\lambda)\right)_{AP_2} = \left(T(a), e_\lambda \right)_{AP_2} = \widehat{\psi}(\lambda)\left(a, \tilde{e}_{\lambda
\alpha}\right)_{AP(\mathbb{Z})},
$$ from where it follows
$$
T^\ast(e_\lambda) = S(e_\lambda).
$$ In  particular, $S$ and $T^\ast$ coincide on the trigonometric polynomials, which are a dense subspace of $\left(AP({\mathbb R}), \norm{\cdot}_\infty\right).$ Since the map $$S:\left(AP({\mathbb R}), \norm{\cdot}_\infty\right)\to AP({\mathbb Z})$$ is continuous, it easily follows that $T^\ast(f) = S(f)$ for every continuous and almost periodic function $f\in AP({\mathbb R}).$
\end{proof}

\begin{remark}\label{rem:transposed}{\rm For every $\psi\in W_0$ the maps
$$
T_\psi:AP({\mathbb Z})\to AP({\mathbb R}),\ a\mapsto \sum_{k\in {\mathbb Z}}a_k T_{\alpha k}\psi,
$$ and
$$
S_\psi:AP({\mathbb R})\to AP({\mathbb Z}),\ f\mapsto \left(\langle f, T_{\alpha k}\psi\rangle\right)_{k\in {\mathbb Z}},
$$ are, up to a constant, adjoint to each other.
\par\medskip
In fact, for $f = e_\lambda\in AP({\mathbb R})$ and $b = \tilde{e}_{\mu\alpha}$ we have, by Lemma \ref{lem:periodizationexp},
$$
\left(S_\psi f, b\right)_{AP({\mathbb Z})} = \overline{\widehat{\psi}(\lambda)}\left(\tilde{e}_{\lambda\alpha}, \tilde{e}_{\mu\alpha}\right)_{AP({\mathbb Z})}.
$$ Also, Proposition \ref{prop:fourier} gives
$$
\left(f, T_\psi b\right)_{AP} = \alpha^{-1}\overline{\widehat{\psi}(\lambda)}\left(\tilde{e}_{\lambda\alpha}, \tilde{e}_{\mu\alpha}\right)_{AP({\mathbb Z})}.
$$ Consequently
$$
S_\psi f = \alpha T_\psi^\ast f
$$ for every trigonometric polynomial $f,$ from where the conclusion follows.
 }
\end{remark}

\begin{theorem}\label{thm:orthogonalsystem}{\rm Let $\psi\in L^1({\mathbb R})$ satisfies
\begin{equation}\label{eq:cota}
C = \sup_{\lambda}\sum_p\left|\widehat{\psi}(\lambda +
\frac{2\pi}{\alpha}p)\right|^2 < \infty.
\end{equation} Then
\par
(a) There is an orthogonal system $\left(h_\lambda\right)_{\lambda\in [0, \frac{2\pi}{\alpha})}$ in $AP_2({\mathbb R})$ such that
$$
\norm{\left(<f, T_{k\alpha}\psi>\right)_{k\in {\mathbb Z}}}^2_{AP} = \sum_{\lambda\in[0, \frac{2\pi}{\alpha})}\left|\left(f, h_{\lambda}\right)_{AP_2}\right|^2
$$ for every $f\in AP({\mathbb R}).$
\par
(b) If moreover $\psi\in W_0 \cup \widehat{W_0}$ then each $h_\lambda\in AP({\mathbb R}).$
 }
\end{theorem}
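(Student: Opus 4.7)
The plan is to construct the family $(h_\lambda)$ explicitly from its Fourier expansion and then match Parseval-type identities on both sides. Specifically, for each $\lambda\in[0,2\pi/\alpha)$ I would set
\[
h_\lambda := \sum_{p\in\mathbb{Z}} \widehat{\psi}\bigl(\lambda + \tfrac{2\pi}{\alpha}p\bigr)\, e_{\lambda + \frac{2\pi}{\alpha}p}.
\]
Hypothesis \eqref{eq:cota} immediately yields $\norm{h_\lambda}_{AP}^2 = \sum_p |\widehat{\psi}(\lambda+\tfrac{2\pi}{\alpha}p)|^2 \leq C$, so each $h_\lambda\in AP_2({\mathbb R})$. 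Orthogonality $(h_\lambda,h_{\lambda'})_{AP_2}=0$ for distinct $\lambda,\lambda'\in[0,2\pi/\alpha)$ is immediate because the cosets $\lambda+\tfrac{2\pi}{\alpha}\mathbb{Z}$ supporting their Fourier spectra are disjoint.

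To establish (a), I would first check the identity on trigonometric polynomials. By Parseval in $AP_2({\mathbb R})$,
\[
(f,h_\lambda)_{AP_2} = \sum_p \widehat{f}\bigl(\lambda+\tfrac{2\pi}{\alpha}p\bigr)\,\overline{\widehat{\psi}\bigl(\lambda+\tfrac{2\pi}{\alpha}p\bigr)},
\]
which is precisely the coefficient of $\tilde{e}_{\lambda\alpha}$ in the expansion of $S(f)$ furnished by Lemma \ref{lem:periodizationexp}(c). The system $\{\tilde{e}_{\lambda\alpha}\}_{\lambda\in[0,2\pi/\alpha)}$ is orthonormal in $AP({\mathbb Z})$ (for distinct $\lambda,\lambda'$ in this fundamental domain, $(\lambda-\lambda')\alpha\in(-2\pi,2\pi)\setminus\{0\}$ lies outside $2\pi\mathbb{Z}$), so Parseval in $AP({\mathbb Z})$ closes the identity on polynomials. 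I would then extend to general $f\in AP({\mathbb R})$ by density of trigonometric polynomials in $(AP({\mathbb R}),\norm{\cdot}_{AP})$, using continuity of both sides in this norm: the left-hand side via Proposition \ref{prop:only_trans} (which is precisely what \eqref{eq:cota} guarantees), and the right-hand side through the Cauchy--Schwarz bound $\sum_\lambda |(f,h_\lambda)_{AP_2}|^2 \leq C\norm{f}_{AP}^2$, obtained by applying Cauchy--Schwarz inside the modulus and then reorganizing the double sum via the bijection $(\lambda,p)\mapsto \mu=\lambda+\tfrac{2\pi}{\alpha}p$.

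For part (b), the case $\psi\in W_0$ reduces to identifying $h_\lambda$ with a convergent translation series. Proposition \ref{prop:fourier} applied with $a=\tilde{e}_{\lambda\alpha}$ shows that the $\mu$-Fourier coefficient of $\sum_k e^{i\lambda\alpha k}T_{k\alpha}\psi$ equals $\alpha^{-1}\widehat{\psi}(\mu)(\tilde{e}_{\lambda\alpha},\tilde{e}_{\mu\alpha})_{AP({\mathbb Z})}$, which vanishes unless $\mu\in\lambda+\tfrac{2\pi}{\alpha}\mathbb{Z}$ and equals $\alpha^{-1}\widehat{\psi}(\mu)$ otherwise; matching against the definition of $h_\lambda$ gives $h_\lambda=\alpha\sum_k e^{i\lambda\alpha k}T_{k\alpha}\psi$, which lies in $AP({\mathbb R})$ by Theorem \ref{thm:fromsequencetofunction}(a). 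For $\psi\in\widehat{W_0}$, writing $\psi=\widehat{\varphi}$ with $\varphi\in W_0$, the sequence $(\widehat{\psi}(\lambda+\tfrac{2\pi}{\alpha}p))_p$ is, up to a constant, $(\varphi(-\lambda-\tfrac{2\pi}{\alpha}p))_p$, which is absolutely summable by the Wiener property applied to the lattice $\tfrac{2\pi}{\alpha}\mathbb{Z}$; the series defining $h_\lambda$ then converges uniformly, and as a uniform limit of trigonometric polynomials it belongs to $AP({\mathbb R})$. The step I expect to require most care is the density argument in part (a), where one must ensure continuity of both sides with respect to $\norm{\cdot}_{AP}$ rather than merely $\norm{\cdot}_\infty$; this is exactly the structural role of hypothesis \eqref{eq:cota}, which makes Proposition \ref{prop:only_trans} available.
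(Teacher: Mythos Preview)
Your proof is correct and follows essentially the same route as the paper's: define $h_\lambda$ via its Fourier coefficients supported on $\lambda+\tfrac{2\pi}{\alpha}\mathbb{Z}$, verify the identity on trigonometric polynomials via Lemma~\ref{lem:periodizationexp}(c), extend by density, and handle part~(b) exactly as you describe. The only minor variation is in the density step: you invoke Proposition~\ref{prop:only_trans} for $\norm{\cdot}_{AP}$-continuity of the left side, whereas the paper uses the more elementary $\norm{\cdot}_\infty$-continuity from Lemma~\ref{lem:periodizationexp}(a) together with $\norm{\cdot}_{AP}\leq\norm{\cdot}_\infty$, and phrases your Cauchy--Schwarz estimate on the right side as Bessel's inequality for the orthogonal system $(h_\lambda)$.
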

\begin{proof} (a) Let $f$ be a trigonometric polynomial. Then, from Lemma \ref{lem:periodizationexp},
$$
 \left(<f, T_{k\alpha}\psi>\right)_k = \sum_{\lambda \in [0,\frac{2\pi}{\alpha})}\left(\sum_p
\widehat{f}(\lambda + \frac{2\pi}{\alpha} p)\cdot\overline{\widehat{\psi}(\lambda +
\frac{2\pi}{\alpha} p)}\right)\tilde{e}_{\lambda \alpha}
 $$ and
 $$
\norm{\left(<f, T_{k\alpha}\psi>\right)_k}_{AP}^2 = \sum_{\lambda \in
[0,\frac{2\pi}{\alpha})}\left|\sum_p \widehat{f}(\lambda + \frac{2\pi}{\alpha}
p)\cdot\overline{\widehat{\psi}(\lambda + \frac{2\pi}{\alpha}
p)}\right|^2.
$$ Now, fix $ \lambda \in [0, \frac{2\pi}{\alpha})$ and consider
$c^{\lambda}=(c^{\lambda}_\mu)_\mu$ defined
as $$ c^{\lambda}_\mu= \left \{\begin{array}{ccc}0 & \mbox{ if  } &
\alpha(\mu-\lambda)\notin 2\pi \mathbb{Z} \\  &
\\\widehat{\psi}(\lambda + \frac{2\pi}{\alpha}p)  & \mbox{ if  }&
\mu = \lambda + \frac{2\pi}{\alpha}p,\ p\in{\mathbb Z}\end{array}\right.$$ By condition (\ref{eq:cota}), there is $h_\lambda \in AP_2({\mathbb R})$ whose $\mu$-Fourier
coefficient is precisely $c^\lambda_\mu.$ Therefore, for each
trigonometric polynomial $f$,
\begin{equation}\label{eq:pol}
\norm{\left(<f, T_{k\alpha}\psi>\right)_{k\in {\mathbb Z}}}^2_{AP} = \sum_{\lambda\in[0, \frac{2\pi}{\alpha})}\left|\left(f, h_{\lambda}\right)_{AP_2}\right|^2.
\end{equation} Moreover, the functions $\left(h_{\lambda}\right )_{\lambda\in [0, \frac{2\pi}{\alpha})}$ are uniformly bounded and form an orthogonal system in $AP_2({\mathbb R})$ since they have mutually disjoint spectra. By Bessel's inequality, the map
$$
f\mapsto \left(\sum_{\lambda\in[0, \frac{2\pi}{\alpha})}\left|\left(f, h_{\lambda}\right)_{AP_2}\right|^2\right)^{\frac{1}{2}}
$$ is a continuous seminorm in $AP_2({\mathbb R}).$ Now, Lemma \ref{lem:periodizationexp} permits to conclude that (\ref{eq:pol}) also holds for every $f\in AP({\mathbb R}).$
\par\medskip
(b) In the case that $\psi\in L^1({\mathbb R})$ and $\widehat{\psi}\in W_0$ we have that
$$
h_\lambda(t) = \sum_{p}\widehat{\psi}(\lambda +
\frac{2\pi}{\alpha}p)e^{i(\lambda + \frac{2\pi}{\alpha}p)t}
$$ is a continuous and almost periodic function since
$$
\sum_{p\in {\mathbb Z}}\left|\widehat{\psi}(\lambda + \frac{2\pi}{\alpha}p)\right| < \infty.
$$
\par\medskip
For $\psi\in W_0$ we have
$$
h_\lambda(t) = \alpha\sum_{k\in {\mathbb Z}}e^{i\lambda \alpha k}T_{\alpha k}\psi (t),
$$ that is,
$$
h_\lambda = \alpha\cdot B(\tilde{e}_{\lambda\alpha}, \psi),
$$ where $B$ is the bilinear map of Theorem \ref{thm:fromsequencetofunction}. In fact, according to Proposition \ref{prop:fourier}, the Fourier coefficients of $\alpha\cdot B(\tilde{e}_{\lambda\alpha}, \psi)$ and those of $h_\lambda$ coincide.
\end{proof}
\par\medskip
The identity from Theorem \ref{thm:orthogonalsystem} can be extended to functions $f$ in the Stepanov class when $\psi\in W.$ We first need an auxiliary result.

\begin{lemma}\label{lem:Stepanov}{\rm Let $\psi \in W$ and $f\in S^2$ be given. Then
\par
 (a) $f\ast \psi\in AP({\mathbb R}),$ and
\par
(b) $\norm{\left(\left<f, T_{\alpha k}\psi\right>\right)_{k}}_{AP({\mathbb Z})}
\leq \norm{f}_{W(L^2,L^\infty)}\norm{\psi}_W.$
}
\end{lemma}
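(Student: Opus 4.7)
My plan rests on a single convolution estimate, with part (b) then reduced to part (a) by sampling.

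For part (a), the key inequality I would prove is
$$
\|f \ast \psi\|_\infty \leq \|f\|_{W(L^2,L^\infty)} \cdot \|\psi\|_W
$$
for all $f \in W(L^2,L^\infty)$ and $\psi \in W$. To get it, I would decompose
$$
(f \ast \psi)(t) = \sum_{k\in{\mathbb Z}} \int_k^{k+1} f(t-s)\psi(s)\, ds,
$$
estimate each summand by $\operatorname{ess\,sup}_{s\in[k,k+1]}|\psi(s)| \cdot \int_k^{k+1}|f(t-s)|\,ds$, apply Cauchy--Schwarz with the constant $1$, and observe that $\int_k^{k+1}|f(t-s)|^2\, ds = \int_{t-k-1}^{t-k}|f|^2 \leq \|f\|_{W(L^2,L^\infty)}^2$. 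Summing in $k$ produces the $\|\psi\|_W$ factor. This estimate also justifies that $f \ast \psi$ is well defined for $f\in S^2$ (which is not a priori in $L^1$), and that the convolution map $S^2 \to (L^\infty, \|\cdot\|_\infty)$ is continuous.

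With this in hand, part (a) follows from a density argument. For any trigonometric polynomial $f = \sum_j c_j e_{\lambda_j}$, direct computation (valid since $W \subset L^1$) gives $f \ast \psi = \sum_j c_j \widehat{\psi}(\lambda_j) e_{\lambda_j}$, which is again a trigonometric polynomial and hence in $AP({\mathbb R})$. Since trigonometric polynomials are dense in $S^2$ by definition and $AP({\mathbb R})$ is closed in $(L^\infty, \|\cdot\|_\infty)$ (being the uniform closure of trigonometric polynomials), the continuity of convolution transfers $f \ast \psi$ into $AP({\mathbb R})$.

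For part (b), set $\tilde\psi(t) := \overline{\psi(-t)}$, which lies in $W$ with $\|\tilde\psi\|_W = \|\psi\|_W$, and rewrite the sampled inner products as $\langle f, T_{\alpha k}\psi\rangle = (f \ast \tilde\psi)(\alpha k)$. By part (a) applied to $\tilde\psi$, the dilated function $h(t) := (f\ast\tilde\psi)(\alpha t)$ belongs to $AP({\mathbb R})$ and satisfies $\|h\|_\infty \leq \|f\|_{W(L^2,L^\infty)}\|\psi\|_W$. Its restriction to ${\mathbb Z}$ is therefore an AP-sequence (using the characterization of $AP({\mathbb Z})$ recalled in the introduction), and the desired bound follows from the elementary fact $\|a\|_{AP({\mathbb Z})} \leq \|a\|_\infty$, which is immediate from the definition of $\|\cdot\|_{AP({\mathbb Z})}$. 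The only genuine obstacle is the initial amalgam convolution estimate; once that is secured, both parts of the lemma drop out essentially for free.
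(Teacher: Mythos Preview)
Your proposal is correct and follows essentially the same route as the paper: the paper also proves the convolution estimate $\norm{f\ast\psi}_\infty \leq \norm{f}_{W(L^2,L^\infty)}\norm{\psi}_W$ by decomposing $\psi$ over unit intervals (writing $\psi_k = \psi\chi_{[k,k+1]}$), then uses density of trigonometric polynomials in $S^2$ for (a), and for (b) identifies $\langle f, T_{\alpha k}\psi\rangle$ with samples of $f\ast\check\psi$ and bounds the $AP({\mathbb Z})$-norm by the sup-norm. Your write-up is slightly more explicit about the Cauchy--Schwarz step and the identity $e_\lambda\ast\psi = \widehat\psi(\lambda)e_\lambda$, but the argument is the same.
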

\begin{proof} (a)
 We put $\psi_k = \psi \chi_{[k,k+1]}.$ Then, for
every $x\in {\mathbb R},$
$$
\sum_{k\in {\mathbb Z}}\int_{-\infty}^{+\infty}\left|f(x-t)\right|\cdot \left|\psi_k(t)\right|\ dt $$ is less than or equal to
$$
\sum_{k\in {\mathbb Z}}\norm{f}_{W(L^2,L^\infty)}\norm{\psi_k}_\infty =
\norm{f}_{W(L^2,L^\infty)}\norm{\psi}_W.
$$ Consequently, $f\ast \psi$ is everywhere defined and bounded and $$||f\ast \psi||_\infty \leq \norm{f}_{W(L^2,L^\infty)}\norm{\psi}_W.$$ As  $P\ast \psi$ is a trigonometric polynomial for every trigonometric polynomial $P,$ we conclude that $f\ast \psi$ can be approximated in the sup-norm by a sequence of trigonometric polynomials whenever $f\in S^2.$ This gives (a)

(b) As in Lemma \ref{lem:periodizationexp} we have that $\left(\left<f, T_{\alpha k}\psi\right>\right)_{k\in {\mathbb Z}}\in AP({\mathbb Z}).$ Moreover
$$
\norm{\left(\left<f, T_{\alpha k}\psi\right>\right)_{k}}_{AP({\mathbb Z})}
 \leq \norm{f\ast \check{\psi}}_\infty \leq \norm{f}_{W(L^2,L^\infty)}\norm{\psi}_W.
$$
\end{proof}
\par\medskip
The previous result also holds under the less restrictive condition that $\psi$ belongs to the amalgam space $W(L^2, L^1).$

\begin{corollary}\label{cor:coefficientsStepanov}{\rm Let $\psi\in W$ and
$f\in \rm S^2$ be given. Then
$$
\norm{\left(<f, T_{k\alpha}\psi>\right)_{k\in {\mathbb Z}}}^2_{AP} = \sum_{\lambda\in[0, \frac{2\pi}{\alpha})}\left|\left(f, h_{\lambda}\right)_{AP_2}\right|^2,
$$ where $(h_\lambda)$ are the functions of Theorem \ref{thm:orthogonalsystem}.
}
\end{corollary}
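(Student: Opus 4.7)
The plan is to reduce Corollary \ref{cor:coefficientsStepanov} to the polynomial case already settled inside the proof of Theorem \ref{thm:orthogonalsystem}. Identity (\ref{eq:pol}) delivers the desired equality for every trigonometric polynomial $f$, and by definition $S^2$ is the closure of the trigonometric polynomials in $\norm{\cdot}_{W(L^2,L^\infty)}$. Hence it will be enough to verify that each side of the identity depends continuously on $f$ with respect to the Stepanov norm.

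For the left-hand side this is exactly Lemma \ref{lem:Stepanov}(b), which yields
$$
\norm{(\langle f, T_{\alpha k}\psi\rangle)_k}_{AP({\mathbb Z})} \leq \norm{\psi}_W\cdot \norm{f}_{W(L^2,L^\infty)}.
$$
For the right-hand side I would argue as follows. The system $(h_\lambda)_{\lambda\in [0, 2\pi/\alpha)}$ constructed in Theorem \ref{thm:orthogonalsystem} is orthogonal in $AP_2({\mathbb R})$ with uniformly bounded norms,
$$
\norm{h_\lambda}_{AP_2}^2 = \sum_p |\widehat{\psi}(\lambda + \tfrac{2\pi}{\alpha}p)|^2 \leq C,
$$
the hypothesis (\ref{eq:cota}) being guaranteed for $\psi\in W$ by Lemma \ref{cor:1}. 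Normalising and applying Bessel's inequality in $AP_2({\mathbb R})$ then gives
$$
\sum_{\lambda\in [0, 2\pi/\alpha)} |(f, h_\lambda)_{AP_2}|^2 \leq C\cdot \norm{f}_{AP}^2,
$$
which is in turn dominated by a constant multiple of $\norm{f}_{W(L^2,L^\infty)}^2$ thanks to the continuous inclusion $S^2\hookrightarrow AP_2({\mathbb R})$ recorded in Section 1.

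With continuity secured on both sides, the two expressions agree on the dense subspace of trigonometric polynomials and therefore on all of $S^2$. I anticipate no real obstacle here beyond bookkeeping; the only delicate point is that the $h_\lambda$'s are merely orthogonal rather than orthonormal, but the uniform bound on $\norm{h_\lambda}_{AP_2}$ cleanly absorbs this into the constant in Bessel's inequality.
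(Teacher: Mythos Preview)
Your argument is correct and is precisely the unpacking of the paper's one-line proof, which reads ``It follows from Theorem \ref{thm:orthogonalsystem} and Lemma \ref{lem:Stepanov}.'' The continuity of the right-hand side via Bessel's inequality (using the uniform bound on $\norm{h_\lambda}_{AP_2}$) is already established inside the proof of Theorem \ref{thm:orthogonalsystem}(a), and Lemma \ref{lem:Stepanov}(b) supplies exactly the continuity of the left-hand side on $S^2$; your density argument then matches the paper's intent verbatim.
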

\begin{proof}
 It follows from Theorem \ref{thm:orthogonalsystem} and Lemma \ref{lem:Stepanov}.
\end{proof}

\section{Bessel type inequalities}
\par\medskip
We recall that $\ell^2(\rm AP({\mathbb Z}))$ consists of all vector sequences $a = \left(a^\ell\right)_{\ell\in {\mathbb Z}}$ where each $a^\ell\in \rm AP({\mathbb Z})$ and  $$\norm{a}^2:= \sum_{\ell\in{\mathbb Z}}\norm{a^\ell}^2_{\rm AP({\mathbb Z})} < \infty.
 $$
 In this section we will show that, given a  Gabor system $(T_{\alpha k}M_{\beta \ell})_{k,\ell}$ with $\psi \in W_0\cup (\widehat{W_0}\cap L^1(\mathbb{R})),$  the  {\it analysis} map $$(AP(\mathbb{R}), ||\ ||_{AP})\to \ell^2(AP(\mathbb{Z})), f\mapsto (\langle f, T_{\alpha k}M_{\beta\ell}\psi\rangle\big)_{k, \ell},$$ is continuous. This could be obtained as a consequence of general results in \cite{RonShen97} with the help of Lemma \ref{lem:mkp} below. Instead, we consider the natural candidate to be the {\it synthesis map} and prove its continuity. In our opinion, this gives additional  information which can be of independent interest.  Let us recall that given a Bessel Gabor system $(T_{\alpha k}M_{\beta \ell}\psi)_{k,\ell}$ in $L^2(\mathbb{R})$ the synthesis operator is   $$(\alpha_{k,\ell})_{k,\ell} \mapsto \sum_{k,\ell}\alpha_{k,\ell}T_{\alpha k}M_{\beta \ell}\psi$$ for each square summable sequence of scalars $(\alpha_{k,\ell})_{k,\ell}.$ Hence, it is reasonable to consider the map
 $$a \mapsto \sum_{\ell\in {\mathbb Z}}\left(\sum_{k\in {\mathbb Z}}a^\ell_k T_{\alpha k}M_{\beta\ell}\psi\right)$$ for $a \in \ell^2(AP(\mathbb{Z})),$ having in mind that, for each $\ell,$ the function
 $$\sum_{k\in {\mathbb Z}}a^\ell_k T_{\alpha k}M_{\beta\ell}\psi$$ is almost periodic when $\psi\in W_0$ by Theorem \ref{thm:fromsequencetofunction}.

\begin{theorem}\label{thm:besselW}
{\rm Let $\psi\in W_0$ be given. Then the map
$$
T:\ell^2(\rm AP({\mathbb Z}))\to AP_2({\mathbb R})$$ defined as $$ T\left((a^\ell)_{\ell\in {\mathbb Z}}\right):= \sum_{\ell\in {\mathbb Z}}\left(\sum_{k\in {\mathbb Z}}a^\ell_k T_{\alpha k}M_{\beta\ell}\psi\right),
$$ is well defined and continuous. Moreover there is a constant $C$ such that
$$
\sum_{\ell\in {\mathbb Z}}\norm{\big(\langle f, T_{\alpha k}M_{\beta\ell}\psi\rangle\big)_{k\in {\mathbb Z}}}_{AP}^2 \leq C \norm{f}_{AP}^2
$$ for every $f\in AP({\mathbb R}).$
}
\end{theorem}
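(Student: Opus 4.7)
My plan is to prove continuity of the synthesis map $T$ first, and then derive the Bessel inequality by identifying the adjoint $T^{\ast}$ (restricted to $AP(\mathbb{R})$) with $\alpha^{-1}$ times the analysis map.

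For the continuity of $T$, I first take $\psi\in W_0$ compactly supported in $[-M,M]$ and $(a^\ell)_\ell$ finitely supported in $\ell$. Since $M_{\beta\ell}\psi\in W_0$ with $\|M_{\beta\ell}\psi\|_W=\|\psi\|_W$, Theorem \ref{thm:fromsequencetofunction} gives $g^\ell:=\sum_k a^\ell_k T_{\alpha k}M_{\beta\ell}\psi\in AP(\mathbb{R})$. Given $t\in[-T,T]$, the compact support of $\psi$ forces $g^\ell(t)$ to reduce to the finite sum $\sum_{|k|\leq K_T}a^\ell_k T_{\alpha k}M_{\beta\ell}\psi(t)$ with $K_T:=\lceil(T+M)/\alpha\rceil$. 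Applying the standard $L^2$-Bessel bound for Gabor systems with $\psi\in W$ (constant $B\leq C\|\psi\|_W^2$, as in \cite[6.2.2]{Groc01}) gives
\[
\int_{-T}^T\Bigl|\sum_{\ell\in F}g^\ell(t)\Bigr|^2 dt\leq B\sum_{\ell\in F}\sum_{|k|\leq K_T}|a^\ell_k|^2.
\]
Dividing by $2T$ and letting $T\to\infty$, the asymptotics $(2K_T+1)/(2T)\to 1/\alpha$ together with the definition of the $AP(\mathbb{Z})$-norm yield $\|\sum_{\ell\in F}g^\ell\|_{AP}^2\leq (B/\alpha)\sum_{\ell\in F}\|a^\ell\|_{AP(\mathbb{Z})}^2$. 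The compact-support hypothesis is then removed by density in $W_0$, invoking the continuous dependence on $\psi$ supplied by Theorem \ref{thm:fromsequencetofunction}(b). For arbitrary $(a^\ell)\in\ell^2(AP(\mathbb{Z}))$ the uniform bound on finite partial sums makes them Cauchy in $AP_2(\mathbb{R})$, establishing that $T$ is well defined and continuous.

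For the Bessel inequality, Proposition \ref{prop:fourier} applied to $M_{\beta\ell}\psi$ gives the $\nu$-Fourier coefficient of $T((a^\ell))$ as $\alpha^{-1}\sum_\ell\widehat{\psi}(\nu-\beta\ell)(a^\ell,\tilde{e}_{\nu\alpha})_{AP(\mathbb{Z})}$; dualization yields $(T^{\ast}e_\nu)^\ell=\alpha^{-1}\overline{\widehat{\psi}(\nu-\beta\ell)}\tilde{e}_{\nu\alpha}$. A direct computation shows $\langle e_\nu, T_{\alpha k}M_{\beta\ell}\psi\rangle=e^{i\nu\alpha k}\overline{\widehat{\psi}(\nu-\beta\ell)}$, hence the analysis map satisfies $S(e_\nu)^\ell=\overline{\widehat{\psi}(\nu-\beta\ell)}\tilde{e}_{\nu\alpha}=\alpha(T^{\ast}e_\nu)^\ell$. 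Thus $S=\alpha T^{\ast}$ on trigonometric polynomials. For general $f\in AP(\mathbb{R})$, sup-norm approximation by trigonometric polynomials $P_n$ produces $\|\cdot\|_{AP}$-convergence; continuity of $T^{\ast}$ plus the componentwise continuity of $S$ granted by Lemma \ref{lem:periodizationexp} allow passage to the limit, so $S=\alpha T^{\ast}$ on all of $AP(\mathbb{R})$. The Bessel inequality $\sum_\ell\|S(f)^\ell\|_{AP(\mathbb{Z})}^2\leq\alpha^2\|T^{\ast}\|^2\|f\|_{AP}^2$ then follows.

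The main obstacle is the compactly-supported case: converting the $L^2$-Bessel inequality into an $AP$-norm bound through the limiting-average procedure. Success hinges on being able to truncate the $k$-sum exactly inside $\int_{-T}^T$, which requires compact support of $\psi$, and on the precise asymptotic balance between $K_T$ and $T$ that converts the averages $\frac{1}{2K_T+1}\sum_{|k|\leq K_T}|a^\ell_k|^2$ into $\|a^\ell\|_{AP(\mathbb{Z})}^2$. Once this step is secured, the density argument in $\psi\in W_0$ and the Cauchy argument in $(a^\ell)$ are straightforward.
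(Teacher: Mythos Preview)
Your proposal is correct and follows essentially the same strategy as the paper: first establish continuity of $T$ by reducing to compactly supported $\psi$, converting the $L^2$-Bessel bound of \cite[6.2.2]{Groc01} into an $AP$-norm estimate via the averaging limit, and then extending by density in $W_0$ and a Cauchy argument in $(a^\ell)$; second, obtain the Bessel inequality by identifying the analysis map with a scalar multiple of $T^\ast$. The only differences are cosmetic---the paper handles the $k$-truncation via the reverse-triangle-inequality estimate already set up in Theorem \ref{thm:fromsequencetofunction}(b) rather than your exact finite-sum reduction, and it invokes Remark \ref{rem:transposed} componentwise for the adjoint identification instead of recomputing Fourier coefficients via Proposition \ref{prop:fourier}.
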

\begin{proof}
 We first assume that $\psi$ vanishes outside
$[-\alpha M, \alpha M]$ and apply \cite[6.2.2]{Groc01} to get a positive constant $C,$ independent of $\psi,$ such that
$$
\norm{\sum_{\ell, k} b^\ell_k T_{\alpha k}M_{\beta\ell}\psi}_{2}^2 \leq C^2 \norm{\psi}_W^2 \cdot
\sum_{\ell, k} \left|b^\ell_k\right|^2
$$ for every finite sequence $\left(b^\ell_k\right)_{k,\ell}.$ We now consider $a^\ell = \left(a^\ell_k\right)_{k\in {\mathbb Z}}\in AP({\mathbb Z})$ such that
$$
\sum_{\ell\in {\mathbb Z}}\norm{a^\ell}_{AP}^2 < \infty
$$ and, for every finite subset $J\subset {\mathbb Z}$ we can proceed as in the proof of Theorem \ref{thm:fromsequencetofunction} (b) to obtain that
$$
\lim_{N\to \infty}\frac{1}{2N+1}\int_{-\alpha(M+N)}^{\alpha(M+N)}\left|\sum_{\ell\in J}\sum_{k\in {\mathbb Z}}a^\ell_k\left(T_{\alpha k}M_{\beta\ell}\psi\right)(t)\right|^2\ dt
$$ is less than or equal to
$$
C^2\norm{\psi}_W^2\cdot\lim_{N\to \infty}\frac{1}{2N+1}\sum_{\ell\in J}\sum_{k=-N}^N\left|a^\ell_k\right|^2 = C^2\norm{\psi}_W^2\cdot\sum_{\ell\in J}\norm{a^\ell}_{AP}^2.
$$ Hence
$$
\norm{\sum_{\ell\in J}\sum_{k\in {\mathbb Z}}a^\ell_k T_{\alpha k}M_{\beta\ell}\psi}_{AP}^2 \leq \frac{C^2}{\alpha}\norm{\psi}_W^2\cdot\sum_{\ell\in J}\norm{a^\ell}_{AP}^2.
$$ As a consequence,
$$
\left\{\sum_{\ell\in J}\left(\sum_{k\in {\mathbb Z}}a^\ell_k T_{\alpha k}M_{\beta\ell}\psi\right):\ J\subset {\mathbb Z}\ \mbox{finite}\right\}
$$ is a Cauchy net in $AP({\mathbb R})$ and there exists
$$
T\left((a^\ell)_{\ell\in {\mathbb Z}}\right):= \sum_{\ell\in {\mathbb Z}}\left(\sum_{k\in {\mathbb Z}}a^\ell_k T_{\alpha k}M_{\beta\ell}\psi\right)\in AP_2({\mathbb R})
$$ with the property that
$$
T:\ell^2(AP({\mathbb Z}))\to AP_2({\mathbb R})
$$ is a continuous map. Once again, we can proceed as in Theorem \ref{thm:fromsequencetofunction} to check that the previous statement holds for every $\psi\in W_0.$ \par\medskip To obtain the estimate $$
\sum_{\ell\in {\mathbb Z}}\norm{\big(\langle f, T_{\alpha k}M_{\beta\ell}\psi\rangle\big)_{k\in {\mathbb Z}}}_{AP}^2 \leq C \norm{f}_{AP}^2
$$ it is enough to show that 
$$
T^\ast(f) = \big(\left(\left<f, T_{k\alpha}M_{\ell\beta}\psi\right>\right)_k\big)_\ell\in\ell^2(AP({\mathbb Z}))
$$ for every $f\in AP({\mathbb R}).$ To this end we denote, for each $\ell^\prime\in {\mathbb Z},$
$$
j_{\ell^\prime}:AP({\mathbb Z})\to \ell^2(AP({\mathbb Z}))
$$ the inclusion $j_{\ell^\prime}(a_k)_k):=(a_k^\ell)_{k,\ell}$ where $a_k^\ell = a_k$ for $\ell = \ell^\prime$ and $a_k^\ell = 0$ otherwise, and we define
$$
T_{\ell^\prime}(a):= \sum_{k\in {\mathbb Z}}a^{\ell^\prime}_k T_{\alpha k}M_{\beta\ell^\prime}\psi.
$$ From Remark \ref{rem:transposed},
$$
T_{\ell}^\ast(f) = j_{\ell}\left(\left(\left<f, T_{k\alpha}M_{\ell\beta}\psi\right>\right)_k\right).
$$ Since
$$
T^\ast(f) = \sum_{\ell\in {\mathbb Z}}T_{\ell}^\ast(f),
$$ the conclusion follows.
\end{proof}

Next, we consider the case $\psi\in L^1({\mathbb R}) \cap \widehat{W_0}.$ To deal with this case let us consider the following heuristic argument: first of all every element of $AP_2({\mathbb R})$ is completely determined by its Fourier coefficients. Given $a\in AP(\mathbb{Z})$ and $\psi \in W_0$ the Fourier coefficients of $$\sum_{k\in {\mathbb Z}}a^\ell_k T_{\alpha k}M_{\beta\ell}\psi$$ are $$\widehat{\psi}(\lambda - \ell\beta)\cdot \left(a^\ell, \tilde{e}_{\lambda
\alpha}\right)_{AP({\mathbb Z})}. $$ Therefore the Fourier coefficients of $$\sum_{\ell\in {\mathbb Z}}\left(\sum_{k\in {\mathbb Z}}a^\ell_k T_{\alpha k}M_{\beta\ell}\psi\right)$$ should be $$
\sum_{\ell \in {\mathbb
Z}}\widehat{\psi}(\lambda - \ell\beta)\cdot \left(a^\ell, \tilde{e}_{\lambda
\alpha}\right)_{AP({\mathbb Z})}.
$$ This  argument works when $\psi\in L^1({\mathbb R}) \cap \widehat{W_0}.$

\begin{theorem}\label{thm:besselfourierW}{\rm Let $\psi\in L^1({\mathbb R}) \cap \widehat{W_0}$ be given. Then the map $$T:\ell^2(\rm AP({\mathbb Z}))\to AP_2({\mathbb R}),$$ defined for $a =
\left(a^\ell\right)_{\ell \in {\mathbb Z}}$ by
\begin{equation}\label{eq:froml2ap}
\left(Ta, e_{\lambda}\right)_{AP_2} = \sum_{\ell \in {\mathbb
Z}}\widehat{\psi}(\lambda - \ell\beta)\cdot \left(a^\ell, \tilde{e}_{\lambda
\alpha}\right)_{AP({\mathbb Z})}
\end{equation} is well defined and continuous. Moreover, there is a constant $C$ such that
$$
\sum_{\ell\in {\mathbb Z}}\norm{\big(\langle f, T_{\alpha k}M_{\beta\ell}\psi\rangle\big)_{k\in {\mathbb Z}}}_{AP}^2 \leq C \norm{f}_{AP}^2
$$ for every $f\in AP({\mathbb R}).$
}
\end{theorem}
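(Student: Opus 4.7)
The plan is to establish that $T$ is a well-defined bounded operator from $\ell^2(AP({\mathbb Z}))$ into $AP_2({\mathbb R})$, and then obtain the Bessel-type estimate by recognising that its Hilbert-space adjoint $T^\ast$, restricted to $AP({\mathbb R})$, coincides with the analysis map $f \mapsto \bigl((\langle f, T_{\alpha k}M_{\beta \ell}\psi\rangle)_k\bigr)_\ell$.

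To control $\|Ta\|_{AP_2}$ I would apply Parseval in $AP_2({\mathbb R})$. For each $\ell$ the map $\lambda \mapsto (a^\ell, \tilde{e}_{\lambda\alpha})_{AP({\mathbb Z})}$ is $\frac{2\pi}{\alpha}$-periodic and supported on a countable set, so the spectrum of $Ta$ lies in a countable set of the form $\bigl\{\lambda_0 + \frac{2\pi}{\alpha}p : \lambda_0 \in \Lambda,\, p \in {\mathbb Z}\bigr\}$ with $\Lambda \subset [0, \frac{2\pi}{\alpha})$. Grouping Fourier coefficients accordingly gives
$$
\|Ta\|_{AP_2}^2 = \sum_{\lambda_0 \in [0, \frac{2\pi}{\alpha})} \sum_{p \in {\mathbb Z}} \left|\sum_{\ell \in {\mathbb Z}} \widehat{\psi}\bigl(\lambda_0 + \tfrac{2\pi}{\alpha}p - \ell\beta\bigr) (a^\ell, \tilde{e}_{\lambda_0 \alpha})_{AP({\mathbb Z})}\right|^2 .
$$
The inner object, viewed for each fixed $\lambda_0$ as a linear operator $\ell^2({\mathbb Z}) \to \ell^2({\mathbb Z})$ with kernel $K(p,\ell) = \widehat{\psi}(\lambda_0 + \frac{2\pi}{\alpha}p - \ell\beta)$, has both row and column sums uniformly bounded: since $\widehat{\psi} \in W_0$, the periodization estimates $\sup_\mu \sum_\ell |\widehat{\psi}(\mu - \ell\beta)| < \infty$ and $\sup_\mu \sum_p |\widehat{\psi}(\mu + \frac{2\pi}{\alpha}p)| < \infty$ hold, with bounds independent of $\lambda_0$. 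Schur's test then yields an operator-norm bound independent of $\lambda_0$, and combining this with Parseval in $AP({\mathbb Z})$ (which gives $\sum_{\lambda_0 \in [0, 2\pi/\alpha)} |(a^\ell, \tilde{e}_{\lambda_0 \alpha})|^2 = \|a^\ell\|_{AP({\mathbb Z})}^2$) delivers $\|Ta\|_{AP_2}^2 \leq C \|a\|_{\ell^2(AP({\mathbb Z}))}^2$.

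For the Bessel-type estimate I would identify $T^\ast e_\lambda$. The direct computation $\langle e_\lambda, T_{\alpha k}M_{\beta\ell}\psi\rangle = e^{i\lambda \alpha k}\overline{\widehat{\psi}(\lambda - \beta\ell)}$ shows $\bigl(\langle e_\lambda, T_{\alpha k}M_{\beta\ell}\psi\rangle\bigr)_k = \overline{\widehat{\psi}(\lambda - \beta\ell)}\,\tilde{e}_{\lambda\alpha}$, and matching $(a, T^\ast e_\lambda)_{\ell^2(AP({\mathbb Z}))} = (Ta, e_\lambda)_{AP_2}$ via \eqref{eq:froml2ap} confirms $T^\ast e_\lambda = \bigl((\langle e_\lambda, T_{\alpha k}M_{\beta\ell}\psi\rangle)_k\bigr)_\ell$. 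By linearity this persists on the trigonometric polynomials; to reach arbitrary $f \in AP({\mathbb R})$ I would approximate $f$ in sup-norm by trigonometric polynomials $f_n$, use Lemma \ref{lem:periodizationexp} for coordinate-wise convergence of the analysis map in $AP({\mathbb Z})$, and combine with continuity of $T^\ast: AP_2({\mathbb R}) \to \ell^2(AP({\mathbb Z}))$ (together with $\|f_n-f\|_{AP} \leq \|f_n-f\|_\infty \to 0$) to conclude that the two expressions agree. The Bessel estimate then follows from $\|T^\ast f\|^2 \leq \|T\|^2 \|f\|_{AP}^2$. The principal obstacle is the Schur step: it crucially needs the $\ell^1$-type periodization bounds coming from $\widehat{\psi}\in W_0$, which are stronger than the $\ell^2$-bound of Lemma \ref{cor:1}; one must also handle the uncountable parameter $\lambda_0 \in [0, \frac{2\pi}{\alpha})$ carefully by exploiting the countable support of the Fourier coefficients involved.
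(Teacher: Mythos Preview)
Your proposal is correct and follows essentially the same route as the paper: both arguments reduce the boundedness of $T$ to a Schur test on the matrix $K(p,\ell)=\widehat{\psi}(\lambda_0+\tfrac{2\pi}{\alpha}p-\ell\beta)$ using the $\ell^1$-periodization bounds available because $\widehat{\psi}\in W_0$, and both identify $T^\ast$ with the analysis map first on $e_\lambda$, then on trigonometric polynomials, and finally on all of $AP({\mathbb R})$ by a density/continuity argument comparing coordinate-wise limits. The only cosmetic difference is that the paper is explicit that the adjoint a priori lands in $\ell^2(B^2({\mathbb Z}))$ (the completion) and uses the projections $\pi_\ell$ to match coordinates, whereas you phrase this as coordinate-wise convergence; the content is the same.
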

\begin{proof}  We first observe that
$$
\left\{\lambda\in [0, \frac{2\pi}{\alpha}):\
\left(a^\ell, \tilde{e}_{\lambda \alpha}\right) \neq 0\ \mbox{for some}\ \ell\in
{\mathbb Z}\right\}
$$ is a countable set, let say $\left\{\lambda_j\right\}_{j=1}^\infty.$ Moreover
$$
\sum_{j=1}^\infty \left|\left(a^\ell, \tilde{e}_{\lambda_j \alpha}\right)\right|^2
= \norm{a^\ell}_{AP({\mathbb Z})}^2.
$$ On the other hand, if $\left(a^\ell, \tilde{e}_{\lambda \alpha}\right) \neq 0\
\mbox{for some}\ \ell\in {\mathbb Z}$ then
$$
\lambda = \lambda_j + \frac{2\pi}{\alpha}p
$$ for some $j\in {\mathbb N}$ and $p\in {\mathbb Z}.$ We put
$$
b^j_{\ell,p} = \widehat{\psi}(\lambda_j + \frac{2\pi}{\alpha}p - \ell\beta).
$$ According to \cite[6.1.2]{Groc01} we have
$$\sup_{j,\ell}\sum_p\left|b^j_{\ell,p}\right| < \infty\ \ \mbox{and}\ \
\sup_{j,p}\sum_\ell\left|b^j_{\ell,p}\right| < \infty.
$$ Schur lemma (see \cite[6.2.1]{Groc01}) implies that, for every $j,$ the matrix
$\left(b^j_{\ell,p}\right)_{\ell,p}$ defines a bounded operator on
$\ell^2({\mathbb Z})$ and the norm of that operator is bounded by a constant $C$
independent of $j.$ In particular, the sum in (\ref{eq:froml2ap}) is absolutely
convergent and
$$
\left(\sum_\ell b^j_{\ell,p} \left(a^\ell, \tilde{e}_{\lambda_j
\alpha}\right)\right)_p \in \ell^2({\mathbb Z})
$$ has norm less than or equal to
$$
C \left(\sum_\ell \left|\left(a^\ell, \tilde{e}_{\lambda_j
\alpha}\right)\right|^2\right)^{\frac{1}{2}}.
$$ Finally
$$
\begin{array}{ll}
\begin{displaystyle}\sum_{\lambda}\left|\left(Ta, e_{\lambda}\right)_{{\rm
AP}_2}\right|^2 \end{displaystyle}& \begin{displaystyle}=
\sum_j\sum_p\left|\sum_\ell b^j_{\ell,p} \left(a^\ell, \tilde{e}_{\lambda_j
\alpha}\right)\right|^2\end{displaystyle} \\ & \\ & \begin{displaystyle}\leq C^2
\sum_j\sum_\ell \left|\left(a^\ell, \tilde{e}_{\lambda_j
\alpha}\right)\right|^2\end{displaystyle} \\ & \\ & \begin{displaystyle}= C^2
\sum_\ell \norm{a^\ell}_{AP({\mathbb Z})}^2 = C^2\norm{a}^2 < \infty.\end{displaystyle}
\end{array}
$$ This shows that $\left(\left(Ta, e_{\lambda}\right)_{AP_2}\right)$ are
the Fourier coefficients of an element $Ta \in AP_2({\mathbb R})$ and the map $T$ is
well-defined and continuous. Consequently
$$
T^\ast:AP_2({\mathbb R})\to \ell^2(B^2({\mathbb Z}))
$$ is a continuous map. Here $B^2({\mathbb Z})$ denotes the completion of the pre-Hilbert space $AP({\mathbb Z}).$ As in Theorem \ref{thm:besselW}, the proof will be complete if we show that
\begin{equation}\label{eq:adjoint}
T^\ast(f) = \big(\left(\left<f, T_{k\alpha}M_{\ell\beta}\psi\right>\right)_k\big)_\ell\in\ell^2(AP({\mathbb Z}))\end{equation} for every $f\in AP({\mathbb R}).$
\par\medskip
First, from Lemma \ref{lem:periodizationexp} we get
$$
\left(\left<e_{\lambda}, T_{k\alpha}M_{\ell\beta}\psi\right>\right)_k =
\overline{\widehat{\psi}(\lambda - \ell\beta)}\cdot\tilde{e}_{\lambda \alpha}\in
AP({\mathbb Z})
$$ and
$$
\widehat{\psi}(\lambda - \ell\beta)\cdot \left(a^\ell, \tilde{e}_{\lambda
\alpha}\right)
$$ is the inner product in $AP({\mathbb Z})$ of $a^\ell$ and $\left(\left<e_{\lambda},
T_{k\alpha}M_{\ell\beta}\psi\right>\right)_k.$ Moreover,
$$
\sum_\ell \norm{\left(\left<e_{\lambda},
T_{k\alpha}M_{\ell\beta}\psi\right>\right)_k}_{AP({\mathbb Z})}^2 =
\sum_\ell\left|\widehat{\psi}(\lambda - \ell\beta)\right|^2 < \infty,
$$ which shows that
$$
T^\ast(e_\lambda) = \left(\left<e_{\lambda}, T_{k\alpha}M_{\ell\beta}\psi\right>\right)_{k,\ell}\in
\ell^2(AP({\mathbb Z})).
$$ This proves (\ref{eq:adjoint}) in the case that $f$ is a trigonometric
polynomial. For every $\ell\in {\mathbb Z}$ we denote by $\pi_\ell$ the projection onto the $\ell$-th coordinate $\pi_\ell:\ell^2(B^2({\mathbb Z}))\to B^2({\mathbb Z}).$ Since for every $\ell\in {\mathbb
Z},$ both operators
$$
(AP({\mathbb R}),\norm{\cdot}_{\infty})\to AP({\mathbb Z}), f\mapsto
\left(\left<f, T_{k\alpha}M_{\ell\beta}\psi\right>\right)_k,
$$ and
$$
(AP({\mathbb R}),\norm{\cdot}_{\infty})\to B^2({\mathbb Z}), f\mapsto \pi_\ell\left(T^\ast
f\right),
$$ are continuous and coincide on the trigonometric polynomials we finally conclude that (\ref{eq:adjoint}) holds for every $f\in AP({\mathbb R})$ and the proof is complete.
\end{proof}

\section{Generalized frames for almost periodic functions}

Although the space $AP_2({\mathbb R})$ is not separable, in this
section we prove that it can be viewed as a direct orthogonal
(continuous) sum of separable Hilbert spaces for which we define
frames in the usual sense.

The following lemmas will be needed for Theorem
\ref{thm:KimRon-revisited} which is the main result of this section.

\begin{lemma}\label{lem:shiftsWiener}{\rm Let $\psi\in W$ be given, then  the map
 $${\mathbb R}\xrightarrow{M} W_0,\  M(\lambda) = M_{-\lambda}\psi$$ is continuous. If in addition $\psi$ is continuous, the map $$
{\mathbb R}\xrightarrow{T} W_0,\ T(\lambda) = T_{-\lambda}\psi$$  is also
continuous.}

\end{lemma}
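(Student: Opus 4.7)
The plan is to reduce both continuity statements to dominated--convergence--type arguments, controlled by the summability of the single sequence $a_k := \mathrm{ess\,sup}_{x\in[0,1]} |\psi(x-k)|$, which is finite precisely because $\psi\in W$.

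For the modulation map I would write $M_{-\lambda}\psi(t) - M_{-\mu}\psi(t) = (e^{-i\lambda t} - e^{-i\mu t})\psi(t)$ and combine the two elementary bounds $|e^{-i\lambda t} - e^{-i\mu t}| \leq 2$ and $|e^{-i\lambda t} - e^{-i\mu t}| \leq |\lambda - \mu|\cdot |t|$. Taking essential suprema on each cell $[k,k+1]$ yields
$$\|M_{-\lambda}\psi - M_{-\mu}\psi\|_W \leq \sum_{k \in \mathbb{Z}} \min\bigl(|\lambda-\mu|(|k|+1),\, 2\bigr)\, a_k.$$
Given $\epsilon > 0$, I would first fix $N$ so that $\sum_{|k|>N} 2\,a_k < \epsilon/2$ and then choose $|\lambda - \mu|$ so small that $|\lambda-\mu|(N+1)\sum_{|k|\leq N} a_k < \epsilon/2$; combining gives the continuity of $M$.

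For the translation map I would first establish the uniform bound $\|T_{-\lambda}\varphi\|_W \leq 2\|\varphi\|_W$ for every $\varphi\in W$ and every $\lambda\in\mathbb{R}$; this is immediate from writing $\lambda = [\lambda] + \{\lambda\}$ with $\{\lambda\}\in[0,1)$, so that each shifted cell $x+\{\lambda\}-k$ with $x\in[0,1]$ falls into the union of two unit cells of $\psi$. Then I would use that $C_c\cap W_0$ is dense in $W_0$: multiplying $\psi$ by a continuous cutoff equal to $1$ on $[-N,N]$ and vanishing outside $[-N-1,N+1]$ produces $\psi_N$ with $\|\psi-\psi_N\|_W$ controlled by the tail $\sum_{|k|\geq N}a_k$. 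For a compactly supported continuous $\psi_N$, the norm $\|T_{-\lambda}\psi_N - T_{-\mu}\psi_N\|_W$ reduces to a finite sum of sup--norms on a fixed compact set (assuming $|\lambda|,|\mu|\leq 1$, say), so the uniform continuity of $\psi_N$ sends that term to $0$ as $\mu\to\lambda$. The standard $\epsilon/3$ triangle inequality, together with the translation bound applied to $\psi-\psi_N$, completes the argument.

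The main obstacle is the translation case: unlike for modulation, the difference $T_{-\lambda}\psi(t) - T_{-\mu}\psi(t)$ admits no $k$--independent pointwise estimate that decouples the dependence on $\psi$ from the parameter, so one genuinely needs the $C_c$--density step and the continuity of $\psi$. This is why continuity of $T$ is asserted only under the extra hypothesis that $\psi$ be continuous, while continuity of $M$ costs nothing beyond $\psi\in W$.
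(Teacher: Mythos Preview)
Your argument is correct. For the modulation map it is essentially identical to the paper's proof: truncate the Wiener sum at level $N$, bound the tail by $2\sum_{|k|>N}a_k$, and control the finitely many central terms using the smallness of $|e^{-i\lambda t}-e^{-i\mu t}|$ on the compact range $|t|\leq N+1$.

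For the translation map your route differs slightly in organization. The paper works directly with $\psi$: it observes that $\psi\in W_0$ vanishes at infinity and is therefore uniformly continuous on all of $\mathbb{R}$, then splits the defining sum $\sum_k\sup_{t\in[k,k+1]}|\psi(t+\lambda)-\psi(t)|$ into a tail (controlled, for $|\lambda|<1$, by the tail of $\|\psi\|_W$) and finitely many central terms (small by uniform continuity). You instead approximate $\psi$ by compactly supported $\psi_N$, invoke the uniform bound $\|T_{-\lambda}\varphi\|_W\leq 2\|\varphi\|_W$, and run an $\epsilon/3$ argument. Both approaches rest on the same two ingredients, tail summability and uniform continuity; yours is the standard $L^p$-translation-continuity template transplanted to the Wiener norm, while the paper's is marginally more direct since it avoids the approximation step. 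Either way the result follows, and your remark about why the extra continuity hypothesis is needed only for $T$ is on point.
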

\begin{proof}
We first prove that $M$ is continuous. In fact, we denote by $\psi_k$ the
restriction of $\psi$ to the interval $[k, k+1].$ Fix $\epsilon > 0$ and choose
$N\in {\mathbb N}$ such that
$$
\sum_{|k| > N}\norm{\psi_k}_{\infty} < \frac{\epsilon}{4}.
$$ Now take $\delta > 0$ with the property that $\left|\lambda -
\lambda_0\right| < \delta$ implies $$\left|e^{-i\lambda t} - e^{-i\lambda_0
t}\right| < \frac{\epsilon}{2\norm{\psi}_W} $$ whenever $|t| \leq N+1.$
Consequently
$$
\norm{M_{-\lambda}\psi - M_{-\lambda_0}\psi}_W \leq \frac{\epsilon}{2} +
\sum_{|k|\leq N}\mbox{ess\ sup}_{t\in [k,k+1]}\left|e^{-i\lambda t} - e^{-i\lambda_0
t}\right||\psi(t)| \leq \epsilon
$$ Since $W_0$ is translation invariant it suffices to check the continuity of
$T$ at $\lambda = 0.$ Fix $\epsilon > 0$ and choose $N\in {\mathbb N}$ such that
$$
\sum_{|k| > N}\sup_{t\in [k,k+1]}\left|\psi(\lambda + t) - \psi(t)\right| <
\frac{\epsilon}{2}
$$ for every $\lambda$ with $|\lambda| < 1.$ Now, since $\psi$ is uniformly
continuous, we can take $0 < \delta < 1$ with the property that
$\left|\lambda\right| < \delta$ implies $$\sum_{|k| \leq N}\sup_{t\in
[k,k+1]}\left|\psi(\lambda + t) - \psi(t)\right| < \frac{\epsilon}{2}.$$  Hence
$\norm{T_{-\lambda}\psi - \psi}_W < \epsilon$ and $T$ is continuous.
\end{proof}

\begin{lemma}\label{lem:mkp}
{\rm Let $\psi \in W \cup \widehat{W_0}$ and $k,p\in {\mathbb
Z}.$ Then
$$
m_{k,p}(\lambda):=\sum_{\ell\in {\mathbb Z}}\widehat{\psi}\left(\lambda +
\frac{2\pi}{\alpha}k-\ell \beta\right)\cdot
\overline{\widehat{\psi}}\left(\lambda + \frac{2\pi}{\alpha}p-\ell \beta\right)
$$ is a continuous function of $\lambda.$
}
\end{lemma}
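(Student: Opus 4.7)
The plan is to express $m_{k,p}$ as a uniformly convergent series of continuous functions. First I would verify that $\widehat{\psi}$ is a continuous function in both cases of the hypothesis: when $\psi \in W$, the inclusion $W \subset L^1(\mathbb{R})$ (which follows from $\norm{\psi}_1 \leq \norm{\psi}_W$) yields $\widehat{\psi} \in C_0(\mathbb{R})$ by Riemann--Lebesgue; when $\psi \in \widehat{W_0}$, write $\psi = \widehat{\varphi}$ with $\varphi \in W_0$ and use Fourier inversion to identify $\widehat{\psi}(\xi) = 2\pi\varphi(-\xi)$, which is continuous because every element of $W_0$ is. Consequently each partial sum
$$
m_{k,p}^{(N)}(\lambda) := \sum_{|\ell|\leq N}\widehat{\psi}\left(\lambda + \tfrac{2\pi}{\alpha}k - \ell\beta\right)\overline{\widehat{\psi}}\left(\lambda + \tfrac{2\pi}{\alpha}p - \ell\beta\right)
$$
is a continuous function of $\lambda$.

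To obtain uniform convergence $m_{k,p}^{(N)} \to m_{k,p}$ on $\mathbb{R}$, the Cauchy--Schwarz inequality gives
$$
\left|m_{k,p}(\lambda) - m_{k,p}^{(N)}(\lambda)\right|^2 \leq \left(\sum_{|\ell|>N}\left|\widehat{\psi}\left(\lambda + \tfrac{2\pi}{\alpha}k - \ell\beta\right)\right|^2\right)\left(\sum_{\ell\in\mathbb{Z}}\left|\widehat{\psi}\left(\lambda + \tfrac{2\pi}{\alpha}p - \ell\beta\right)\right|^2\right).
$$
The second factor is uniformly bounded in $\lambda$ by Lemma \ref{cor:1} applied with $\gamma = \beta$. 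For the first factor I would invoke the key fact used already in the proof of Lemma \ref{cor:1}, namely that $|\widehat{\psi}|^2 \in W$, so that $\sum_{k \in \mathbb{Z}}\sup_{x\in[k,k+1]}|\widehat{\psi}(x)|^2 < \infty$. As $\ell$ varies over $|\ell|>N$ and $\lambda$ over $\mathbb{R}$, the arguments $\lambda + \tfrac{2\pi}{\alpha}k - \ell\beta$ fall in disjoint windows of length $\beta$ drifting to infinity; each such window meets at most $\lceil \beta \rceil + 1$ unit intervals, so the first factor is dominated by a tail of the convergent Wiener-norm series $\sum_{k}\sup_{[k,k+1]}|\widehat{\psi}|^2$, independent of $\lambda$, and tends to zero as $N\to\infty$.

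Thus $m_{k,p}$ is a uniform limit of continuous functions on $\mathbb{R}$, hence continuous. The only substantive step is the uniform smallness of the Wiener-periodization tail; this is essentially the same estimate already behind Lemma \ref{cor:1}, so no new analytical input is required. (As a minor consistency check, one notes that $m_{k,p}$ is $\beta$-periodic in $\lambda$ by a shift of the summation index, which is compatible with the uniform convergence on $\mathbb{R}$ established above.)
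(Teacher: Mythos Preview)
Your approach is sound in outline and genuinely different from the paper's. The paper does not estimate the series directly; instead it writes $m_{k,p}$ as the composition of three continuous maps: the modulation $\lambda\mapsto M_{-\lambda}\psi$ from $\mathbb{R}$ into $X\in\{W,\widehat{W_0}\}$ (continuous by Lemma~\ref{lem:shiftsWiener}), a sampling map $X\to\ell^2\times\ell^2$ sending $\varphi$ to the pair of sequences $\big((\widehat{\varphi}(\tfrac{2\pi}{\alpha}k-\ell\beta))_\ell,(\widehat{\varphi}(\tfrac{2\pi}{\alpha}p-\ell\beta))_\ell\big)$ (continuous by Lemma~\ref{cor:1}), and the pointwise product $\ell^2\times\ell^2\to\ell^1$. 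Your direct Cauchy--Schwarz argument is more elementary and bypasses Lemma~\ref{lem:shiftsWiener} entirely, at the cost of needing the explicit tail control on $|\widehat{\psi}|^2$ that the paper packages abstractly.

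There is, however, a real slip in your tail estimate. You assert uniform convergence of $m_{k,p}^{(N)}$ to $m_{k,p}$ on all of $\mathbb{R}$, claiming that
\[
\sum_{|\ell|>N}\left|\widehat{\psi}\!\left(\lambda + \tfrac{2\pi}{\alpha}k - \ell\beta\right)\right|^2
\]
is dominated, \emph{independently of $\lambda$}, by a tail of the Wiener series for $|\widehat{\psi}|^2$. This is false: taking $\lambda = (N+1)\beta - \tfrac{2\pi}{\alpha}k$ makes the term $\ell=N+1$ contribute $|\widehat{\psi}(0)|^2$, so the tail need not be small uniformly in $\lambda$. Your sentence ``the arguments \ldots\ fall in disjoint windows of length $\beta$ drifting to infinity'' is only correct once $\lambda$ is confined to a bounded set.

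The repair is immediate, and you already have the ingredient in hand: the $\beta$-periodicity of $m_{k,p}$ that you relegate to a ``minor consistency check'' should be promoted to the argument proper. Restrict $\lambda$ to $[0,\beta]$; then for $|\ell|>N$ the arguments $\lambda+\tfrac{2\pi}{\alpha}k-\ell\beta$ do lie outside $[-c,c]$ with $c\to\infty$ as $N\to\infty$, and your Wiener-tail bound gives uniform convergence on $[0,\beta]$, hence continuity there, hence everywhere by periodicity. Equivalently, it suffices to claim only locally uniform convergence, which is all that continuity requires.
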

\begin{proof} Let $X$ denote one of the Banach spaces $W$ or $\widehat{W_0},$ to which $\psi$ belongs. We consider the maps
$$
X \xrightarrow{U} \ell^2 \times \ell^2,\ \ \ell^2 \times \ell^2\xrightarrow{V}
\ell^1,\ \ {\mathbb R}\xrightarrow{M} X
$$ defined by
$$
U(\varphi) = \left(\left(\widehat{\varphi}\left(\frac{2\pi}{\alpha}k-\ell
\beta\right)\right)_{\ell},
\left(\widehat{\varphi}\left(\frac{2\pi}{\alpha}p-\ell
\beta\right)\right)_{\ell}\right),
$$ and
$$
V(a,b) = \left(a_{\ell}\cdot \overline{b}_{\ell}\right)_{\ell},\ \ M(\lambda) =
M_{-\lambda}\psi.
$$ Then $U$ and $M$ are continuous by Lemma \ref{cor:1} and Lemma
\ref{lem:shiftsWiener}, while the continuity of $V$ is obvious. Finally, the map
$$
{\mathbb R}\xrightarrow{\Theta}\ell^1,\ \ \Theta = V\circ U\circ M
$$ is continuous and the conclusion follows.
\end{proof}

Let $ {\mathcal G}(\psi, \alpha, \beta) = \left(T_{\alpha k}M_{\beta
\ell}\psi\right)_{k,\ell\in {\mathbb Z}}$ be a Gabor system where $\psi\in W \cup \left(L^1({\mathbb R})\cap \widehat{W}\right)$. According to Theorem \ref{thm:orthogonalsystem}, for every $\ell\in {\mathbb Z}$ there is an orthogonal system $\left(h_{\lambda,\ell}\right)_{\lambda\in [0, \frac{2\pi}{\alpha})}$ in $AP_2({\mathbb R})$ such that
$$
\norm{\left(<f, T_{k\alpha}M_{\beta\ell}\psi>\right)_{k\in {\mathbb Z}}}^2_{AP} = \sum_{\lambda\in[0, \frac{2\pi}{\alpha})}\left|\left(f, h_{\lambda,\ell}\right)_{AP_2}\right|^2
$$ for every $f\in AP({\mathbb R}).$

\begin{theorem}\label{thm:KimRon-revisited}
{\rm Let $\psi \in W$ or $\psi\in L^1 \cap \widehat{W}$
and assume that
$ {\mathcal G}(\psi, \alpha, \beta)$ is a Gabor frame. Then
\vskip.2cm
(a) ${\mathcal G}(\psi, \alpha, \beta)$ is an
AP-frame for $AP_2({\mathbb R})$, i.e. there exist $A,B>0$ such that
\begin{equation}\label{eq:APframe}
 A \norm{f}^2_{AP} \leq \sum_{\ell\in{\mathbb Z}}\sum_{\lambda\in[0, \frac{2\pi}{\alpha})}\left|\left(f, h_{\lambda,\ell}\right)_{AP_2}\right|^2\leq B \norm{f}^2_{AP}.
\end{equation}
for every $f\in AP_2({\mathbb R})$.

(b) In the case $\psi \in W$ we have
$$
A \norm{f}^2_{AP} \leq \sum_{\ell}\norm{\left(\left<f, T_{\alpha
k}M_{\beta \ell}\psi\right>\right)_k}_{AP}^2\leq B \norm{f}^2_{AP}.
$$ for every $f\in \rm S^2.$
 }
\end{theorem}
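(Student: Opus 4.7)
The plan is to reduce the theorem to a fiberwise matrix inequality. From the description of $h_{\lambda,\ell}$ inside the proof of Theorem~\ref{thm:orthogonalsystem}, namely $h_{\lambda,\ell}=\sum_p \widehat{\psi}(\lambda+\tfrac{2\pi}{\alpha}p-\ell\beta)\, e_{\lambda+\frac{2\pi}{\alpha}p}$, a direct computation gives, for every $f\in AP_2({\mathbb R})$,
$$
(f,h_{\lambda,\ell})_{AP_2}=\sum_{p\in{\mathbb Z}} a_p(\lambda)\,\overline{\widehat{\psi}\bigl(\lambda+\tfrac{2\pi}{\alpha}p-\ell\beta\bigr)},
$$
where $a_p(\lambda):=(f,e_{\lambda+\frac{2\pi}{\alpha}p})_{AP_2}$. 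Writing $N(\lambda)$ for the matrix with entries $N(\lambda)_{\ell,p}=\overline{\widehat{\psi}(\lambda+\tfrac{2\pi}{\alpha}p-\ell\beta)}$, this yields the compact identification
$$
\sum_{\lambda\in[0,\frac{2\pi}{\alpha})}\sum_{\ell\in{\mathbb Z}}\bigl|(f,h_{\lambda,\ell})_{AP_2}\bigr|^2=\sum_{\lambda}\norm{N(\lambda)a(\lambda)}_{\ell^2}^2,\qquad \norm{f}_{AP}^2=\sum_\lambda\norm{a(\lambda)}_{\ell^2}^2,
$$
where only countably many $\lambda\in[0,\tfrac{2\pi}{\alpha})$ contribute on either side.

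The upper bound in (a) is now essentially a Bessel estimate. Lemma~\ref{cor:1} (with $\gamma=\tfrac{2\pi}{\alpha}$) guarantees $\sup_{\lambda,\ell}\norm{h_{\lambda,\ell}}_{AP_2}^2<\infty$, and since the $h_{\lambda,\ell}$ have pairwise disjoint spectra they form an orthogonal system in $AP_2({\mathbb R})$. Bessel's inequality applied to the associated normalised system therefore gives $\sum_{\lambda,\ell}|(f,h_{\lambda,\ell})_{AP_2}|^2\leq B\norm{f}_{AP}^2$ for every $f\in AP_2({\mathbb R})$.

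For the lower bound in (a) I would fiberize the $L^2$-Gabor frame condition itself. A Plancherel computation, splitting the integral over $\mathbb{R}$ into cosets of $\tfrac{2\pi}{\alpha}{\mathbb Z}$, shows
$$
\sum_{k,\ell}\bigl|\langle g,T_{\alpha k}M_{\beta\ell}\psi\rangle\bigr|^2=\frac{1}{2\pi\alpha}\int_0^{\frac{2\pi}{\alpha}}\norm{N(\lambda)\widehat{g}_\lambda}_{\ell^2}^2\,d\lambda,\qquad \widehat{g}_\lambda:=\bigl(\widehat{g}(\lambda+\tfrac{2\pi}{\alpha}p)\bigr)_{p\in{\mathbb Z}},
$$
together with $\norm{g}_2^2=\frac{1}{2\pi}\int_0^{\frac{2\pi}{\alpha}}\norm{\widehat{g}_\lambda}_{\ell^2}^2\,d\lambda$. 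Combining this with the lower frame bound and testing against $g$ whose Fourier transform is supported on a small set contained in a single coset produces $\norm{N(\lambda)v}_{\ell^2}^2\geq \alpha A\,\norm{v}_{\ell^2}^2$ for every $v\in\ell^2$ and almost every $\lambda$. Lemma~\ref{lem:mkp} provides exactly the continuity of the entries of $N(\lambda)^\ast N(\lambda)$ needed to upgrade this from almost every $\lambda$ to every $\lambda\in[0,\tfrac{2\pi}{\alpha})$ (first for finitely supported $v$, then by density in $\ell^2$). Summing the resulting pointwise bound against $a(\lambda)$ yields $\sum_{\lambda,\ell}|(f,h_{\lambda,\ell})_{AP_2}|^2\geq \alpha A\norm{f}_{AP}^2$, which is the missing lower frame estimate.

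Part (b) then drops out as a corollary: for $\psi\in W$ and $f\in S^2$, Corollary~\ref{cor:coefficientsStepanov} identifies $\sum_\ell\norm{(\langle f,T_{\alpha k}M_{\beta\ell}\psi\rangle)_k}_{AP}^2$ with $\sum_{\lambda,\ell}|(f,h_{\lambda,\ell})_{AP_2}|^2$, so (a) applied to $f\in S^2\subset AP_2({\mathbb R})$ closes the argument. The main obstacle of the whole proof is exactly the fiberization step in (a): extracting the pointwise inequality $\norm{N(\lambda)v}_{\ell^2}^2\geq \alpha A\norm{v}_{\ell^2}^2$ from the integrated $L^2$-frame inequality, and then lifting the ``almost every $\lambda$'' conclusion to every $\lambda$ by means of Lemma~\ref{lem:mkp}.
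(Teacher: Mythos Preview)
Your lower-bound argument and your treatment of part~(b) are essentially the paper's own proof: reduce to the fiberwise matrix $N(\lambda)^\ast N(\lambda)$ (whose entries are the $m_{k,p}(\lambda)$ of Lemma~\ref{lem:mkp}), invoke the Ron--Shen fiberization of the $L^2$-Gabor frame to get $A\,\mathrm{I}\leq N(\lambda)^\ast N(\lambda)\leq B\,\mathrm{I}$ for almost every $\lambda$, and use the continuity supplied by Lemma~\ref{lem:mkp} to pass to every $\lambda$. The paper formulates this via $\mathcal G(\widehat\psi,\beta,\alpha)$ and the dual-lattice matrix, but the content is the same.

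The upper-bound argument, however, contains a genuine error. You claim that the family $(h_{\lambda,\ell})_{\lambda,\ell}$ has pairwise disjoint spectra and hence is orthogonal. This is false: the spectrum of $h_{\lambda,\ell}$ is contained in $\lambda+\tfrac{2\pi}{\alpha}{\mathbb Z}$, which depends only on $\lambda$ and not on $\ell$. Thus for fixed $\lambda$ the functions $h_{\lambda,\ell}$, $\ell\in{\mathbb Z}$, all live in the same spectral fiber and are in general far from orthogonal. Consequently Bessel's inequality does not apply to the full family, and the bound $\sum_{\lambda,\ell}|(f,h_{\lambda,\ell})_{AP_2}|^2\leq B\norm{f}_{AP}^2$ does not follow from your argument. (Theorem~\ref{thm:orthogonalsystem} only asserts orthogonality of $(h_{\lambda,\ell})_\lambda$ for each \emph{fixed} $\ell$.)

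The repair is already in your hands: the fiberization you set up for the lower bound yields the two-sided estimate $\alpha A\norm{v}_{\ell^2}^2\leq\norm{N(\lambda)v}_{\ell^2}^2\leq \alpha B\norm{v}_{\ell^2}^2$ for all $v$ and all $\lambda$, once you use both frame bounds of $\mathcal G(\psi,\alpha,\beta)$ and Lemma~\ref{lem:mkp}. Summing over $\lambda$ against $a(\lambda)$ then gives the upper bound as well. This is exactly how the paper proceeds---treating upper and lower bounds symmetrically via the matrix inequality---so once you drop the orthogonality shortcut your proof coincides with the paper's.
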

\begin{proof} Let us first assume that $f$ is a trigonometric polynomial. According to Lemma \ref{lem:periodizationexp}, for every $\ell\in {\mathbb Z},$
 $$
 \norm{\left(<f, T_{k\alpha}M_{\ell\beta}\psi>\right)_k}_{AP}^2$$ coincides with $$\sum_{\lambda\in [0,
\frac{2\pi}{\alpha})}\left|\sum_{k}\widehat{f}(\lambda +
\frac{2\pi}{\alpha}k)\cdot \widehat{\psi}(\lambda + \frac{2\pi}{\alpha}k -
\ell\beta)\right|^2.
 $$ All the involved sums are finite, hence
 $$
 \sum_{\ell}\norm{\left(<f, T_{k\alpha}M_{\ell\beta}\psi>\right)_k}_{AP}^2$$ can be written as $$\sum_{\lambda\in [0,
\frac{2\pi}{\alpha})}
\sum_{k,p}\widehat{f}(\lambda +
\frac{2\pi}{\alpha}k)\overline{\widehat{f}}(\lambda +
\frac{2\pi}{\alpha}p)\cdot m_{k,p}(\lambda),
 $$ where $m_{k,p}(\lambda)$ are as in Lemma \ref{lem:mkp}. On the other hand, as $\mathcal G(\psi, \alpha,\beta)$ is a Gabor frame, then
 $$
 {\mathcal G}(\widehat{\psi}, \beta, \alpha)
 $$ is also a Gabor frame and we can apply \cite{RonShen97} (see also \cite[6.3.4]{Groc01}) to
conclude that there are positive constants $A$ and $B$ with the property that,
for almost every $\lambda,$ the operator $M(\lambda):\ell^2\to \ell^2$ defined
on the finite sequences by
 $$
 \left<M(\lambda)a,b\right> = \sum_{k,p}m_{k,p}(\lambda)a_p\overline{b}_k
 $$ is a bounded operator and
 $$
 A\mbox{I}_{\ell^2} \leq M(\lambda)\leq B\mbox{I}_{\ell^2}.
 $$ Let $c = \left(c_{\ell}\right)_{\ell\in {\mathbb Z}}$ be a finite sequence.
Then
\begin{equation}\label{eq:M}
 A\norm{c}_2^2 \leq \left<M(\lambda)c,c\right> =
\sum_{k,p}m_{k,p}(\lambda)c_p\overline{c}_k \leq B\norm{c}_2^2
 \end{equation} almost everywhere. Since, from Lemma \ref{lem:mkp}, each $m_{k,p}(\lambda)$ is a continuous function, the inequalities (\ref{eq:M}) hold for every $\lambda\in
{\mathbb R}.$ In particular, for every trigonometric polynomial $f$
we obtain
 $$
 \begin{array}{ll}\begin{displaystyle}
 A \sum_{\lambda\in [0,
\frac{2\pi}{\alpha})}\sum_k\left|\widehat{f}(\lambda +
\frac{2\pi}{\alpha}k)\right|^2\end{displaystyle} & \begin{displaystyle}\leq \sum_{\ell}\norm{\left(<f, T_{k\alpha}M_{\ell\beta}\psi>\right)_k}_{AP}^2\end{displaystyle}\\ & \\ & \begin{displaystyle}\leq B
\sum_{\lambda\in [0,
\frac{2\pi}{\alpha})}\sum_k\left|\widehat{f}(\lambda +
\frac{2\pi}{\alpha}k)\right|^2.\end{displaystyle}\end{array}
 $$ That is,
 \begin{equation}\label{eq:apframe}
 A \norm{f}^2_{AP} \leq \sum_{\ell\in{\mathbb Z}}\sum_{\lambda\in[0, \frac{2\pi}{\alpha})}\left|\left(f, h_{\lambda,\ell}\right)_{AP_2}\right|^2\leq B \norm{f}^2_{AP}.
 \end{equation} Since, for each $\ell\in {\mathbb Z},$
$\left(h_{\lambda,\ell}\right)_{\lambda}$ is a uniformly bounded orthogonal system and the
trigonometric polynomials are dense in $AP_2({\mathbb R})$ it turns out that each
$\varphi_{\ell}(f):=\left(\left(f, h_{\lambda,\ell}\right)_{AP_2}\right)_{\lambda}$
defines a continuous map from $AP_2({\mathbb R})$ into $\ell^2([0,
\frac{2\pi}{\alpha}))$   and
$$
\sum_{|\ell| \leq N}\norm{\varphi_\ell(f)}^2 \leq B \norm{f}^2_{AP}
$$ for every $N\in {\mathbb N}$ and $f\in AP_2({\mathbb R}).$ Consequently,
$$
|||f|||:=\left(\sum_{\ell}\norm{\varphi_\ell(f)}^2\right)^{\frac{1}{2}} = \left(\sum_{\ell\in{\mathbb Z}}\sum_{\lambda\in[0, \frac{2\pi}{\alpha})}\left|\left(f, h_{\lambda,\ell}\right)_{AP_2}\right|^2\right)^{\frac{1}{2}}
$$ defines a continuous norm on $AP_2({\mathbb R}).$ We can conclude that
(\ref{eq:apframe}) hold for every $f\in AP_2({\mathbb R}).$ This proves (a). In particular,
\begin{equation}\label{eq:apframe2}
A \norm{f}^2_{AP} \leq \sum_{\ell}\norm{\left(\left<f, T_{\alpha
k}M_{\beta \ell}\psi\right>\right)_k}_{AP}^2\leq B \norm{f}^2_{AP}.
\end{equation} for every continuous almost periodic function $f.$ Moreover, in
the case $\psi\in W_0$ we can apply Corollary
\ref{cor:coefficientsStepanov} to conclude that (\ref{eq:apframe2})
hods for every $f\in \rm S^2,$ which proves (b).
\end{proof}

\begin{remark}{\rm (a) We observe that
$$
\sum_{\ell\in{\mathbb Z}}\sum_{\lambda\in[0, \frac{2\pi}{\alpha})}\left|\left(f, h_{\lambda,\ell}\right)_{AP_2}\right|^2 = ||\left(f, h_{\lambda,\ell}\right)_{{\rm
AP}}||_{L^2(E)}^2
$$ with $E = [0, \frac{2\pi}{\alpha})\times \mathbb Z$ endowed with discrete
(i.e. counting) measure. The statement (a)  means that the set of functions $\{h_{\lambda,\ell}\}_{(\lambda,
\ell)\in E}$ is a generalized frame in the sense of Kaiser et al. (\cite{AliAntoGaze93}, \cite{GabaHan03}, \cite{Kais94}) as mentioned in the introduction.

\noindent(b) If a Gabor frame $ {\mathcal G}(\psi, \alpha, \beta)$ defines an AP-frame  \cite{KimRon09}, one can construct for each $\ell$ the  orthogonal system $\left(h_{\lambda,\ell}\right)_{\lambda\in [0, \frac{2\pi}{\alpha})}$ in $AP_2({\mathbb R})$ and the estimate (\ref{eq:APframe}) holds for each $f\in AP_2({\mathbb R}).$

\noindent(c) Under the hypothesis of Theorem \ref{thm:KimRon-revisited}, the map 
$$
AP({\mathbb R})\to \ell^2(AP({\mathbb Z})), f\mapsto \big(\left(\left<f, T_{k\alpha}M_{\ell\beta}\psi\right>\right)_k\big)_\ell
$$ can be extended to an isomorphism from $AP_2({\mathbb R})$ into $\ell^2(AP_2({\mathbb Z})),$ where $AP_2({\mathbb Z})$ is the completion of $(AP({\mathbb Z}),\norm{\cdot}_{AP({\mathbb Z})}).$
}
\end{remark}

\begin{definition}{\rm For any countable subset $M\subset\mathbb{R}$ we denote by $AP_2(M)$ the
closed subspace of $AP_2({\mathbb R})$ consisting of those $f\in AP_2({\mathbb R})$ whose spectrum is contained in $M$.
}
\end{definition}

Then $AP_2(M)$ is a separable Hilbert space and the orthogonal projection onto $AP_2(M)$ of the family of functions $\left(h_{\lambda,\ell}\right)_{(\lambda,\ell)\in E}$ is a Hilbert frame in $AP_2(M).$ We remark that as particular case of the spaces $AP_2(M)$ we recover
the subspace of $AP_2({\mathbb R})$ of functions with rational almost periodic spectrum. We analyze some special cases.
\par\medskip
(a) Assume $\Lambda \subset [0, \frac{2\pi}{\alpha})$ is a countable set and $M = \Lambda + \frac{2\pi}{\alpha}{\mathbb Z}.$ Then the orthogonal projection of $\left(h_{\lambda,\ell}\right)_{(\lambda,\ell)\in E}$ onto $AP_2(M)$ is precisely
$$
\left(h_{\lambda,\ell}\right)_{(\lambda,\ell)\in \Lambda \times {\mathbb Z}}.
$$
\par\medskip
(b) $M = \left\{\mu_j:\ j\in {\mathbb N}\right\}$ has the property that
$$
\mu_i - \mu_j \notin \frac{2\pi}{\alpha}{\mathbb Z}
$$ whenever $i\neq j.$
\par\medskip
We put $$\mu_j = \lambda_j + \frac{2\pi}{\alpha}p_j,$$ where $\lambda_j\in [0, \frac{2\pi}{\alpha})$ and $p_j\in {\mathbb Z}$ and denote by $\pi_M:AP_2({\mathbb R})\to AP_2(M)$ the orthogonal projection. Since the spectrum of $h_{\lambda,\ell}$ is contained in $\lambda + \frac{2\pi}{\alpha}{\mathbb Z},$ then $\pi_M\left(h_{\lambda,\ell}\right) = 0$ in the case that $\lambda \neq \lambda_j$ for all $j\in {\mathbb N}.$ However, for $\lambda = \lambda_j$ we have
$$
\pi_M\left(h_{\lambda_j,\ell}\right) = \widehat{\psi}(\mu_j-\ell)e_{\mu_j}.
$$ Consequently
$$
\left\{\widehat{\psi}(\mu_j-\ell)e_{\mu_j}:\ j\in {\mathbb N}, \ell\in {\mathbb Z}\right\}
$$ is a Hilbert frame in $AP_2(M).$ We observe that if $M$ is unbounded then no system of the form
$$
\left\{\widehat{\psi}(\mu_j-\ell)e_{\mu_j}:\ j\in {\mathbb N}, \ell\in F\right\},\ F\subset {\mathbb Z}\ \mbox{finite,}
$$ can be a frame in $AP_2(M).$ In fact,
$$
\sum_{\ell \in F}\left|(e_{\mu_j}, \widehat{\psi}(\mu_j-\ell)e_{\mu_j})_{AP}\right|^2
$$ is arbitrarily small if $|\mu_j|$ is large enough.

\end{document}